\documentclass{article}
\usepackage[utf8]{inputenc}
\usepackage[small,compact]{titlesec}
\usepackage{amsmath,bm,mathtools,amsfonts,mathrsfs,amssymb,amsthm,mathabx,setspace}
\usepackage{graphicx,caption,epsfig,subfigure,epsfig,wrapfig,sidecap}
\usepackage{url,color,verbatim,algorithmicx}
\usepackage[sort,compress]{cite}

\usepackage{bm,float}
\usepackage[top=1in,bottom=1in,left=1in,right=1in]{geometry}

\usepackage[ruled,boxed]{algorithm}
\usepackage[scaled]{helvet}
\usepackage[T1]{fontenc}
\usepackage[bookmarks=true, bookmarksnumbered=true, colorlinks=true,   pdfstartview=FitV,
linkcolor=blue, citecolor=blue, urlcolor=blue]{hyperref}
\usepackage{multirow, multicol,makecell, colortbl, hhline}
\usepackage{booktabs} 
\usepackage{enumitem} 
\usepackage{etoc}          

\newcommand{\innerp}[1]{\langle{#1}\rangle}
\def\Gap{\mathrm{Gap}}
\def\deltat{\delta}

\usepackage[noend]{algpseudocode}

\newtheorem{theorem}{Theorem}[section]
\newtheorem{assumption}[theorem]{Assumption}
\newtheorem{remark}[theorem]{Remark}
\newtheorem*{remark*}{Remark}

\newtheorem{lemma}[theorem]{Lemma}

\newcommand{\R}{\mathbb{R}}

\newcommand{\mcB}{\mathcal{B}}

\newcommand{\mcF}{\mathcal{F}}

\newcommand{\mcM}{\mathcal{M}}

\usepackage{xcolor}

\newcommand{\helen}[1]{\textcolor{blue}{#1}}

\def \Xb{\mathbf{X}}

\def \E{\mathbb{E}}

\def\mB{{\bf B}}
\def\mX{{\bf X}}     

\numberwithin{equation}{section}

\title{
ISALT: Inference-based schemes adaptive to large time-stepping \\ for locally Lipschitz ergodic systems 
}
\author{Xingjie Li, Fei Lu, and Felix X.-F. Ye }
\date{}

\begin{document}

\maketitle


\begin{abstract}
    Efficient simulation of SDEs is essential in many applications, particularly for ergodic systems that demand efficient simulation of both short-time dynamics and large-time statistics. However, locally Lipschitz SDEs often require special treatments such as implicit schemes with small time-steps to accurately simulate the ergodic measure. We introduce a framework to construct inference-based schemes adaptive to large time-steps (ISALT) from data, achieving a reduction in time by several orders of magnitudes. The key is the statistical learning of an approximation to the infinite-dimensional discrete-time flow map. We explore the use of numerical schemes (such as the Euler-Maruyama, a hybrid RK4, and an implicit scheme) to derive informed basis functions, leading to a parameter inference problem. We introduce a scalable algorithm to estimate the parameters by least squares, and we prove the convergence of the estimators as data size increases. 
    
    We test the ISALT on three non-globally Lipschitz SDEs: the 1D double-well potential, a 2D multiscale gradient system, and the 3D stochastic Lorenz equation with degenerate noise. Numerical results show that ISALT can tolerate time-step magnitudes larger than plain numerical schemes. It reaches optimal accuracy in reproducing the invariant measure when the time-step is medium-large.
\end{abstract}
Keywords: Stochastic differential equations, inference-based scheme,   model reduction in time, locally Lipschitz ergodic systems,  data-driven modeling  
\tableofcontents

\section{Introduction}

Efficient and accurate simulation of SDEs is important in many applications such as Monte Carlo sampling, data assimilation and predictive modeling (see e.g.,\cite{KP99,Maday2002,weinan2007heterogeneous,KMK03,legoll2010effective,CLMMT16}). In particular, ergodic systems often demand efficient and accurate simulation of both short-time dynamics and large-time statistics. Explicit schemes, while efficient and accurate for short-time, tend to miss the invariant measure in large-time simulations because of the accumulation of numerical error. In particular, for locally Lipschitz SDEs, they tend to be numerical unstable and may miss the invariant measure even for the small time-step (for example, the Euler-Maruyama scheme, because it destroys the Lyapunov structure \cite{roberts1996exponential,MSH02}) and require special treatments such as taming scheme under small time-step size \cite{Kloeden2012,Jentzen2015}. 
Implicit schemes, on the other hand, are numerically stable and can accurately simulate the invariant measure when the time-step is small. However, they are computationally inefficient due to the limited time-step size and the costly implicit step.  

We introduce ISALT, inference-based schemes adaptive to a large time-stepping statistical learning framework to construct schemes with large time-steps from data. When the data are generated from an implicit scheme, ISALT combines the advantages of both explicit and implicit schemes: it is as fast as an explicit scheme and is accurate as an implicit scheme in producing the invariant measure. The inference is done once for all and the inferred scheme can be used for general purpose simulations, either long trajectories or ensembles of trajectories with different initial distributions.

More specifically, we consider the large time-step approximation of the ergodic SDE with additive noise
\begin{equation}\label{eq:sde}
d\mX_t = f(\mX_t)dt + \sigma d\mB_t; \, 
\end{equation}
where the drift $f:\R^d\to\R^d$ is local-Lipschitz. Here $\mB$ is a standard m-dimensional Brownian motion with $m\leq d$, the diffusion matrix $\sigma\in \R^{d\times m}$ has linearly independent columns, and they represent a degenerate noise when $m<d$. Our goal is to design an explicit scheme with large time-stepping so that it can efficiently and accurately simulate both short-time dynamics
and large-time statistics such as invariant measures. 

We infer such explicit schemes with large time-stepping from offline data generated by an implicit scheme. Figure \ref{fig:schematic} shows the schematic plot of the procedure. The essential task is to approximate the infinite-dimensional discrete-time flow map. A major difficulty in a statistical learning approach is the curse of dimensionality (COD) when using generic basis functions. Our key contribution is to approximate the flow map by parametrization of numerical schemes, which provides informed basis functions, thus avoiding the COD by harnessing the rich information and structure in classical numerical schemes.  We also introduce a scalable algorithm to compute the maximal likelihood estimator by least squares, which converges and is asymptotic normal as the data size increases (see Theorem \ref{thm_convEst}). Furthermore, we show that the inferred scheme, when it is a parametrization of an explicit scheme, has 1-step strong order as the explicit scheme.  

\begin{figure}[htp!]
 \centering \vspace{-1mm}
    \includegraphics[width=0.7\textwidth]{./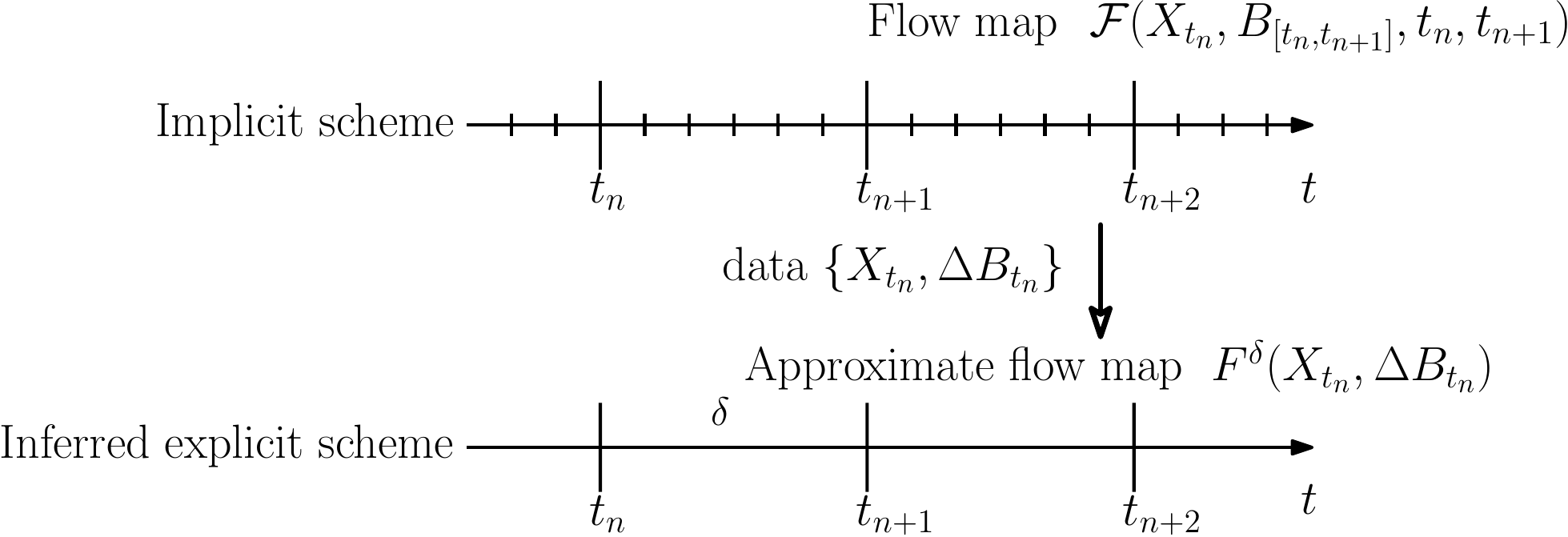}
 \vspace{-3mm}     \caption{Schematic plot of inferring explicit scheme with a large time-step.}
    \label{fig:schematic}
\end{figure}

 In this study, we focus on learning approximate flow maps that use only the increments of the Brownian motion each time interval (that is, the function $F^\deltat(X_{t_n}, \Delta B_{t_n})$ in Figure \ref{fig:schematic}). We explore the derivation of informed-basis functions from three types of classical numerical schemes: the  Euler-Maruyama (EM) \cite{KP99}, a hybrid RK4 (fourth-order Runge-Kutta)\cite{HP06}, and an implicit stochastic split backward Euler (SSBE) \cite{MSH02}, and we denote the inferred schemes by IS-EM, IS-RK4 and IS-SSBE. We test them on three non-globally Lipschitz SDEs: the 1D double-well potential, a 2D multiscale gradient system, and the 3D stochastic Lorenz equation with degenerate noise. Numerical results show that the inferred schemes can tolerate time-steps ten to hundreds times larger than plain numerical schemes, and it reaches optimal accuracy in reproducing the invariant measure at a medium large time-step (see Figures \ref{fig:TVDPDF_LangLocal1D}, \ref{fig:TVDPDF_GradCouple}, \ref{fig:TVDPDF_Lorenz}, and \ref{fig:TVDPDF_Lorenz_x3}). Overall, IS-RK4 produces the most accurate invariant measures in all examples, particularly when the dynamics is dominated by the drift (e.g., the Lorentz system) because the plain RK4 provides a higher order approximation to the drift.    

Discretization with large time-stepping for differential equations (SDEs, ODEs and PDEs) is a model reduction in time, part of the general problem of space-time model reduction (see e.g., \cite{weinan2007heterogeneous,KMK03,legoll2010effective,MH13,CL15,leiDatadrivenParameterization2016,Lu20Burgers,hudson2020coarse}). Since the large time-step prevents classical numerical approximations based on Taylor expansions, data-driven approaches have been the primary efforts and have witnessed many successes, including the time series approaches (see e.g., \cite{CL15,LLC17,LinLu20}) and deep learning methods that can efficiently solve high-dimensional PDEs and SDEs on rough space-time meshes (see e.g., \cite{han2018_SolvingHighdimensional,sirignano2018_DGMDeep,bar2019learning,Oosterlee2020,yang2020physics}), to name just a few. 
In these approaches, the discrete-time models account for the effects of the unresolved dynamics in an averaged fashion through inference, thus leading to computationally efficient models for the effective dynamics \cite{legoll2010effective,Legoll2019,CL15,LLC16}.
The contribution of our ISALT is to provide a simple yet effective approach to achieve large time-stepping by combining inference with classical numerical schemes. In particular, the explicit parametric form in ISALT clearly identifies the connection between classical numerical schemes and the models inferred from data. It provides a ground for further understanding the fundamental issues of data-based reduced models, such as quantification of the approximation and optimality of the  reduction in time or in space-time. 


The exposition of our study proceeds as follows. We first summarize the notations in Table 1. After introducing a flow map view for numerical schemes, we introduce in Section 2 the ISALT framework, that is, the procedure and algorithm for inferring schemes adaptive to the large time-step from data.  Section 3 presents the theoretical results on the convergence of the estimators. In Section 4, we test ISALT on the three typical non-globally Lipschitz SDEs. Section 5 concludes our main findings with an outlook of future research. 
\vspace{-2mm}
\begin{table}[H]
	\begin{center}
		\caption{Notations }
		\label{tab:notation-Infer}
		\begin{tabular}{ l  l }
		\toprule 
			Notation   &  Description \\  \hline
				$\mX_t$ and $\mB_t$			& true state process and original stochastic force  \\
				$f(\mX_t)$,\; $\sigma\in \mathbb{R}^{d\times m}$ with $m\le d$  & local-Lipschitz drift and diffusion matrix\\
				 $dt$  & time-step generating data \\
				$\deltat= \Gap \times dt$  & time-step for inferred scheme, $\Gap\in \{ 1, 2, 4, 10, 20, 40, 80,\ldots\}$ \\
				$t_i = i\deltat$ & discrete time instants of data \\
				$\{\mX_{t_0:t_N}^{(m)}, \mB_{t_0:t_N}^{(m)}\}_{m=1}^M$ & Data: $M$ independent paths of $\mX$ and $\mB$ at discrete-times \\ \hline
			$\mathcal{F}\left(\mX_{t_i},\, \mB_{[t_{i}, \, t_{i+1})}, t_{i}, t_{i+1}\right)$
			& true flow map representing  $(\mX_{t_{i+1}}-\mX_{t_i})/\deltat$\\
			${F}^{\delta}(\mX_{t_n},\Delta \mB_{t_n})$ & approximate flow map using only $\mX_{t_n}$ and $\Delta \mB_{t_n} = \mB_{t_{n+1}}-\mB_{t_{n}} $\\
			$\widetilde F^{\delta}\left(c^{\delta}, \mX_{t_n},\Delta \mB_{t_n} \right)$ & parametric approximate flow map \\ 
			$c^{\delta}=(c_0^{\delta},\dots,c_p^{\delta})=c_{0:p}^{\delta}$
			&  parameters to be estimated for the inferred scheme\\
			$\eta_n$ and $\sigma_{\eta}^{\delta}$		& iid $N(0, I_d)$ and a diagonal matrix, representing regression residual\\ \hline
		EM and IS-EM & plain Euler-Maruyama and inferred scheme (IS) parametrizing EM\\
		HRK4 and IS-RK4 & plain hybrid RK4 and inferred scheme parametrizing RK4 \\
		SSBE and IS-SSBE & split-step stochastic backward Euler and IS parametrizing it\\
			\bottomrule	
		\end{tabular}
	\end{center} \vspace{-2mm}
\end{table}

\section{Inference of explicit schemes from data}


Throughout this study, we assume that the SDE \eqref{eq:sde} is ergodic. Roughly speaking, the SDE is ergodic when  (i) there is a Lyapunov function $V:\R^d\to\R^+$ such that $\lim_{|x|\to\infty}V(x) =\infty$ and $\sup_{|x|>R}\mathcal{A}V(x) \to\infty$ as $R\to\infty$, where  $\mathcal{A}$ is the generator for \eqref{eq:sde} given by $\mathcal{A} g= \innerp{f,\nabla g} +\frac{1}{2} \sum_{i,j=1}^d [\sigma \sigma^\top]_{i,j} \partial_{x_i,x_j} g$ \cite{MSH02}; and (ii) $\mX$ satisfies a minority condition that ensures recurrence
\cite{MSH02,Khasminskii12}.

Our goal is to design a numerical scheme with a large time-step so that it can efficiently and accurately simulate both short-time dynamics 
and large-time statistics such as invariant measures. This is of particular interest for SDEs with non-globally Lipschitz drift, because explicit schemes such as Euler-Maruyama often blow up or miss the invariant measure even if they are stable  \cite{roberts1996exponential,MSH02} and implicit schemes are computationally costly while being accurate in large-time statistics. 

We obtain explicit schemes with large time-steps through inference from offline data generated by an implicit scheme. The key is to approximate the flow map by parametrization of numerical schemes, 
instead of using a generic basis, 
to avoid the curse of dimensionality in the statistical learning of the flow map.   
Toward the goal, we will first introduce the view that numerical schemes are approximations of the flow map, then we outline the framework of statistical learning of the flow map. 

\subsection{A flow map view of numerical schemes}

A numerical scheme aims to approximate the discrete-time flow map of the stochastic process. More precisely, for a time-step $\deltat> 0$, let $t_i= i\deltat$ and denote  $(\mX_{t_i}, i\geq 0)$ the process defined in \eqref{eq:sde} at discrete times. Based on the Markov property of $(\mX_t)$
a numerical scheme approximates the flow map
\begin{equation}\label{eq:flowmap}
\begin{aligned}
 \mX_{t_{i+1}}  - \mX_{t_{i}}  =\int_{t_i}^{t_{i+1}}f(\mX_s)ds +\int_{t_i}^{t_{i+1}} \sigma d\mB_s
 & =  \deltat\mcF(\mX_{t_i}, \mB_{[t_i,t_{i+1}]}, t_i, t_{i+1}) \\
 & \approx \deltat F^{\deltat} (\mX_{t_i}, \Delta \mB_{t_i}) 
 \end{aligned}
\end{equation}
where $\mcF$ is a functional depending on $\mX_{t_i}$, the continuous trajectory $ \mB_{[t_i,t_{i+1}]}$, $t_i$, and $t_{i+1}$.  The simplest schemes approximate the functional by a function $F^{\deltat} (\mX_{t_i}, \Delta \mB_{t_i})$ on $\R^{2d}$, in which one represents $ \mB_{[t_i,t_{i+1}]}$ by its increment on the interval $\Delta \mB_{t_i} =\mB_{t_{i+1}}-\mB_{t_i} \sim  \mathcal{N}(0, \deltat I_d)$. Among many such schemes (\cite{KP99,Hu96,MSH02,mao2007stochastic,leimkuhler2016molecular}), we consider three simple and representative examples: the explicit Euler-Maruyama scheme (EM) scheme \cite{KP99}, the hybrid RK4 (HRK4) \cite{HP06}, 
and the split-step stochastic backward Euler (SSBE) \cite{MSH02} 
\begin{equation}\label{eq:schemes}
\begin{aligned}
& \text{EM}   & \Xb_{n+1} &=  \Xb_n + f(\Xb_n) \deltat + \sigma \Delta \mB_n,   \\
& \text{HRK4}  & \Xb_{n+1} &=  \Xb_n + \phi_1 (\Xb_n,\sigma \Delta \mB_n) \deltat +  \sigma \Delta \mB_n,\\
& \text{SSBE} &  \Xb_{n+1}&= \Xb_* + \sigma \Delta \mB_n, \text{ with } \Xb_*=\Xb_n + f(\Xb_*)\deltat, \\
\end{aligned}
\end{equation}
where the term $ \phi_1$ is a standard RK4 step with the stochastic force treated as a constant input: 
\begin{align*} 
 \phi_1(\Xb_n,\sigma \Delta \mB_n): & = (k_1+2 k_2+2 k_3+k_4)/6, \text{ with } \\
    k_1 & = f(\Xb_n)+\sigma \Delta \mB_n/\deltat,\\
    k_2  &= f(\Xb_n + k_1\cdot \delta /2)+\sigma \Delta \mB_n/\deltat,\\
    k_3 &= f(\Xb_n + k_2\cdot \delta /2)+\sigma \Delta \mB_n/\deltat,\\
    k_4 &= f(\Xb_n + k_3\cdot \delta /2)+\sigma \Delta \mB_n/\deltat.
\end{align*}
Correspondingly, they approximate the flow map $\mcF(\mX_{t_i}, \mB_{[t_i,t_{i+1}]}, t_i, t_{i+1})$ by
\begin{equation}\label{eq:flowmap_schemes}
\begin{aligned}
& \text{EM}   & & F_{EM}^{\deltat} (\mX_{t_i}, \Delta \mB_{t_i})=  f(\mX_{t_i}) + \sigma \Delta \mB_{t_i}/\deltat,   \\
& \text{HRK4}   & & F_{RK4}^{\deltat} (\mX_{t_i}, \Delta \mB_{t_i})=  \phi_1(\mX_{t_i}, \sigma \Delta \mB_{t_i}) + \sigma \Delta \mB_{t_i}/\deltat,   \\
& \text{SSBE} &  &   F_{SSBE}^{\deltat} (\mX_{t_i}, \Delta \mB_{t_i})= (\mX_* - \mX_{t_i})/\deltat + \sigma \Delta \mB_{t_i}/\deltat, \text{ with } \mX_*=\mX_{t_i} +  f(\mX_*)\deltat.  \\
\end{aligned}
\end{equation}

For short time simulation, these schemes are of strong order 1, i.e., the discrete approximation converges to the true solution trajectory-wisely in probability at order $\delta^{1}$ as the time-step vanishes, since the noise is additive \cite{Rum82,KP99,Hu96,MSH02}.  %
For large time simulation aiming to approximate the invariant measure, the explicit schemes can be problematic for local Lipschitz drifts and degenerate noises, for instance, the EM scheme may destroy the Lyapunov structure and fail to be ergodic for any choice of time-step \cite[Lemma 6.3]{MSH02}. The implicit scheme SSBE, on the other hand, is ergodic and produces accurate invariant measure when the time-step is sufficiently small \cite[Section 6]{MSH02}. 

 In many applications, it is desirable to have an efficient numerical scheme being accurate in both short-time and large-time. A drawback of an implicit scheme is its inefficiency: it has to solve a fixed point problem in the implicit step, which is computationally costly and limits the time-step size. Taking advantage of implicit schemes, we use them to generate data and learn 
 efficient explicit schemes with large time-steps from the data.   

\subsection{Inference of a scheme from data} \label{sec:InferFramework}
We infer from data an explicit scheme that is accurate in both short-time dynamics and large-time statistics. It maintains the efficiency of explicit schemes while preserving the invariant measure as implicit schemes. The key idea is to learn an approximation of the flow map from data. To avoid the curse of dimensionality in the learning of the flow map, which is often high-dimensional and nonlinear, we derive parametric functions from the system and its numerical schemes. Roughly, the inference consists of four parts:
\begin{enumerate}
\item Generation of faithful data using an implicit scheme with a small time-step size; 
\item Derivation of a parametric form to approximate the flow map, by extracting basis functions from the system and its numerical approximations; 
\item Parameter estimation by maximal likelihood methods, which leads to a least squares problem when the parametric form is linear in the parameters;  
\item Model selection: by cross-validation and convergence criteria. 
\end{enumerate}

\paragraph{Data generation} We generate faithful data, consisting of trajectories of the process 
at discrete times $\{t_i=i\delta\}$, by an accurate implicit scheme. That is, we first solve the system by an implicit scheme with a small time-step $\Delta t< \delta$, then we downsample the solution at the discrete times. We also save the trajectory data of the stochastic force $(\mB_t)$. Denote these trajectories by 
\begin{equation}\label{eq:data}
\text{Data: } \{\mX_{t_0:t_N}^{(m)}, \mB_{t_0:t_N}^{(m)}\}_{m=1}^M,
\end{equation}
where $N$ denotes the number of observing time grids and $M$ denotes the number of independent trajectories.

The initial conditions $\{ \mX_{t_0}^{(m)} \}_{m=1}^M$ are samples from either a long trajectory, which represents the invariant measure, or an initial distribution that helps to explore the distribution of the process.  

\paragraph{Derivation of parametric form} The major difficulty in inference is the approximation of the flow map $\mcF(\mX_{t_i}, \mB_{[t_i,t_{i+1}]}, t_i, t_{i+1})$, which is an infinite-dimensional functional. When using a non-parametric approach with the generic dictionary or basis functions, one encounters the well-known curse-of-dimensionality (COD): the size of the dictionary or basis functions increases exponentially as the dimension increases. Recent efforts on overcoming the COD include selecting adaptive-to-data basis functions in a nonparametric fashion \cite{jiang2020modeling}, assuming a low-dimensional interaction between the components of the state variable in the spirit of particle interactions \cite{LZTM19}, or or deep learning methods that approximate high dimensional functions through compositions of simple functions \cite{han2018_SolvingHighdimensional,sirignano2018_DGMDeep,bar2019learning,Oosterlee2020,yang2020physics}.

We take a semi-parametric approach: we avoid the COD by deriving parametric functions from the full system and its numerical schemes, which provide rich information about the flow map. In particular, we aim for parametric functions depending linearly on the parameters, so that the parameters can be estimated by least squares and our algorithm is scalable .

We focus on approximating the flow map $\mcF(\mX_{t_i}, \mB_{[t_i,t_{i+1}]}, t_i, t_{i+1})$ by 
the simplest functions $F^{\deltat} (\mX_{t_i}, \Delta \mB_{t_i})$, in a parametric form
\begin{equation} \label{Fdelta}
 F^\deltat (c^\deltat; x,\xi) = \sum_{i=0}^p c_i^\deltat \phi_i(x,\xi),
\end{equation}
with $\xi$ having the same distribution as $\Delta \mB_{t_i}$. 
Here $\phi_i:\R^{2d}\to \R^d$ are basis functions to be extracted from numerical schemes (see Section \ref{sec:derivePara}), and $\{c_i^\delta\}$ are the parameters to be estimated from data. That is, with $(\Xb_n,\xi_n)$ corresponding to $(\mX_{t_n},\Delta \mB_{t_n}) $, we infer the following scheme 
\begin{equation} \label{RM_paraForm}
\Xb_{n+1} = \Xb_{n} + \deltat F^\deltat (c^\deltat; \Xb_n,\xi_n) +\deltat \sigma_\eta \eta_n =  \Xb_{n} + \deltat\sum_{i=0}^p c_i^\deltat \phi_i(\Xb_n,\xi_n) + \deltat\sigma_\eta^\deltat \eta_n,
\end{equation}
 where we add $\{\sigma_\eta^\deltat \eta_n\}$ to represent the residual of the regression.  For convenience, we assume that $\{\eta_n\}$ is a sequence of iid Gaussian $N(0,I_d)$ random variables and is independent of $\{\xi_n\}$, and $\sigma_\eta^\deltat$ is a diagonal matrix. 

In view of statistical learning, 
the function \eqref{Fdelta} approximates the flow map in the function space $\mathcal{H}=\mathrm{span}\{\phi_i(x,\xi)\}_{i=0}^p$, which is a subspace of $L^2(\R^{2d}, \mu\otimes \nu)$ with $\mu$ being the invariant measure of $\mX$ and $\nu\sim \mathcal{N}(0,\delta I_d)$ being the distribution of $\xi$ (which represents $\Delta \mB_{t_i}$).  We refer  $\{\phi_i(\helen{x},\xi)\}$ as basis functions and will extract them from numerical scheme (see Section  \ref{sec:derivePara}).    
 
 Here we focus on using only $\Delta \mB_{t_n}$, but one can use more sample points of the trajectory $\mB_{[t_n,t_{n+1}]}$ and extract terms from high-order approximations based on multiple stochastic integral \cite{Hu96}. We postpone this as future work.   
  
\paragraph{Parameter estimation}  We estimate the parameters by maximizing the likelihood for the model in \eqref{RM_paraForm} with the data $\{\mX_{t_0:t_N}^{(m)}, \mB_{t_0:t_N}^{(m)}\}_{m=1}^M$: 
\begin{align*}
l(c_{0:p}^\deltat)  
                           & = \frac{1}{M}\sum_{m=1}^M  l(\mX_{t_0:t_N}^{(m)}, \mB_{t_0:t_N}^{(m)} \mid c_{0:p}^\deltat), \text{ where } \\
l(\mX_{t_0:t_N}, \mB_{t_0:t_N} \mid c_{0:p}^\deltat) & =
\frac{1}{N} \sum_{k=1}^d\sum_{n=0}^{N-1} \left[ \frac{|\mX_{t_{n+1}}^k -\mX_{t_{n}}^k - \deltat  F_k^\deltat (c^\deltat,\mX_{t_i},\Delta \mB_{t_n})|^2 }{2\sigma_{k,\deltat}^2}  -\frac{1}{2} \log (2\pi (\sigma_{k,\deltat})^2) \right],
\end{align*}
where $F_k^\deltat$ is the $k$-th entry of the $\R^d$-valued function $F^\deltat$ defined in \eqref{Fdelta}: 
\[
F_k^\deltat (c^\deltat, \mX_{t_i},\Delta \mB_{t_n}) = \sum_{i=0}^p c_{i,k}^\deltat \phi_i^k(\mX_{t_n},\Delta \mB_{t_n}). 
\]

Noticing that the likelihood function is quadratic in the parameters $\big\{c_{i,k}^\delta\big\}_{i=0}^{p}$, we estimate them by least squares regression:
\begin{equation}\label{est_c}
\begin{aligned}
\widehat{c_{0:p,k}^{\delta,N,M}} & = (\widebar{A}^{N,M}_k)^+ \widebar{b}_k^{N,M},  \\
(\widehat{\sigma_{\eta,k}^{\deltat,N,M} })^2 & =\frac{1}{N} \sum_{n=0}^{N-1} |\mX_{t_{n+1}}^k -\mX_{t_{n}}^k - \deltat  F_k^\deltat (\widehat{c^{\deltat,N,M} },\mX_{t_i},\Delta \mB_{t_n})|^2, 
\end{aligned}
\end{equation}
where $A^{+}$ denotes the pseudo-inverse of $A$, and the normal matrix $\widebar{A}^{N,M}_k$ and vector $ \widebar{b}_k^{N,M}$ are given by 
\begin{equation}\label{LeatSquare_def}
\begin{aligned}
\widebar{A}^{N,M}_k(i,j) &=\frac{1}{MN} \sum_{m=1}^M\sum_{n=0}^{N-1} \phi_i^k(\mX_{t_n}^{k,(m)},\Delta \mB_{t_n}^{k,(m)}) \phi_j^k(\mX_{t_n}^{k,(m)},\Delta \mB_{t_n}^{k,(m)}),\quad i,j=0,\dots, p, \\
\widebar{b}_k^{N,M} (i) &= \frac{1}{MN} \sum_{m=1}^M\sum_{n=0}^{N-1} \frac{ \mX_{t_{n+1}}^{k,(m)}-\mX_{t_{n}}^{k,(m)}}{\deltat} \phi_i^k(\mX_{t_n}^{k,(m)},\Delta \mB_{t_n}^{k,(m)}) ,
\quad i=0,\dots,p.
\end{aligned}
\end{equation}
Here $\widehat{\sigma_{\eta,k}^{\deltat,N,M} }$, the square root of the regression's residuals, provide the diagonal entries of $\sigma_\eta^\deltat$.

The above least square regression is based on the assumption that the residual $\sigma^{\delta}\eta_n$ defined in \eqref{RM_paraForm} is  Gaussian with uncorrelated entries. The entry-wise regression aims to reflect the dynamical scale difference between entries. One may improve the approximation by considering correlated entries or other distributions for the residual.

\paragraph{Model selection}
The parametric form in Eq.\eqref{RM_paraForm} has many freedoms underdetermined, particularly when we have multiple options for the parametric form, along with possible overfitting and redundancy in these options. We select the estimated scheme by the following criteria: 
\begin{itemize}
\item Cross validation: the estimated scheme should be stable and can reproduce the distribution of the process, particularly the main dynamical-statistical properties. We will consider the marginal invariant densities and temporal correlations:  
\begin{equation} \label{eq:stats} 
\begin{aligned} 
\text{Invariant density of $(\mX_t^k)$: } & p(z)dz = \E  [\mathbf{1}_{ (z, z+ dz)}(\mX_{t_n}^{k})]  \approx \frac{1}{N M}\sum_{m,n=1}^{M,N} \mathbf{1}_{ (z, z+ dz)}(\mX_{t_n}^{k,(m)}), \\ 
\text{Temporal correlations: } & C_k(h)= \E  [\mX_{t_n+h}^{k} \mX_{t_n}^{k}]   \approx \frac{1}{N M}\sum_{m,n=1}^{M,N} \mX_{t_n+h}^{k,(m)} \mX_{t_n}^{k,(m)} 
\end{aligned}
 \end{equation}
 for $k=1,\ldots,d$.

\item Convergence of the estimators. If the model is perfect and the data are either independent trajectories or a long trajectory from an ergodic measure, the estimators should converge to the true values when the data size increases (see Theorem~\ref{thm:const_perfect_model}). While our parametric model is not perfect, the estimators should also converge when the data size increases (see Theorem~\ref{thm_convEst}) and highly oscillatory estimators indicate large misfits between the proposed model and data. 
\end{itemize}

\subsection{Parametrization of numerical schemes} \label{sec:derivePara}

We derive parametric forms to approximate the flow map from numerical schemes. The numerical schemes provide informed basis functions for inference because of their error-controlled approximations to the flow map $\mcF(\mX_{t_i}, \mB_{[t_i,t_{i+1}]},t_i,t_{i+1})$ in \eqref{eq:flowmap}. These basis functions can either be simply the terms in an explicit scheme or terms approximating the implicit schemes. One may view this approach as a parametrization of numerical schemes. 

We focus on using only  $\Delta \mB_{t_i}$, the increment of $\mB_{[t_i,t_{i+1}]}$, and seek parametric functions 
$F^{\deltat} (c^{\deltat},\mX_{t_i}, \Delta \mB_{t_i})$ (as in \eqref{Fdelta}) to approximate the flow map. This constraint has two advantages: first, it makes the inferred-scheme computationally efficient, because the inferred scheme will generate only two random numbers ($\xi_i, \eta_i$ in \eqref{RM_paraForm}) in each time step to represent the stochastic forces; second, it significantly reduces the function space of inference, from a functional depending on the path $\mB_{[t_i,t_{i+1}]}$ to a function depending only on the increment.  By starting from this simple setting, we hope to provide insight on the future design of schemes using multi-point noise by parametrizing high-order stochastic schemes (see e.g.\cite{Hu96,KP99,jentzen2010taylor}).  


The flow maps \eqref{eq:flowmap_schemes} of the numerical schemes in \eqref{eq:schemes} provide three representative candidates for a parametric function $ \widetilde F^{\deltat} (c^{\deltat},\mX_{t_i}, \Delta \mB_{t_i})$. The EM is an explicit one-step scheme, the RK4 is an explicit multi-step scheme, and the SSBE is an implicit one-step scheme. Linearly parametrizing them or their Ito-Taylor expansions, i.e., adding coefficients to the terms,  we obtain parametric flow maps:
\begin{equation}\label{eq:paraFormEuler}
\begin{aligned}
& \text{EM}   &  \widetilde F_{EM}^{\deltat} (c^{\deltat};\mX_{t_i}, \Delta \mB_{t_i}) &=  c_0^{\deltat} \mX_{t_i} + c_1^{\deltat} f(\mX_{t_i}) + c_2^{\deltat} \sigma \Delta \mB_{t_i}/\deltat,   \\
& \text{HRK4}   &  \widetilde F_{RK4}^{\deltat} (c^{\deltat};\mX_{t_i}, \Delta \mB_{t_i}) &=  c_0^{\deltat} \mX_{t_i} + c_1^{\deltat} \phi_1(\mX_{t_i},  \sigma \Delta \mB_{t_i}) + c_2^{\deltat} \sigma \Delta \mB_{t_i}/\deltat,   \\
& \text{SSBE} &     \widetilde F_{SSBE}^{\deltat} (c^{\deltat};\mX_{t_i}, \Delta \mB_{t_i}) &= c_0^{\deltat} \mX_{t_i} + c_1^{\deltat} \phi_1^{SSBE}(\mX_{t_i}) + c_2^{\deltat} \sigma \Delta \mB_{t_i}/\deltat,
\end{aligned}
\end{equation}
where the function 
 $\phi_1^{SSBE}$ is given by 
\begin{equation}\label{phi1_BEs}
\begin{aligned}
\phi_1^{SSBE}(\mX_{t_i}) & =  (I_d - \deltat  \nabla f(\mX_{t_i})) ^{-1} f(\mX_{t_i}). 
\end{aligned}
\end{equation}
These terms are derived as follows. 
\begin{itemize}
\item The parametric flow map $\widetilde F_{EM}^{\deltat} (c^{\deltat};\mX_{t_i}, \Delta \mB_{t_i}) $ and $\widetilde F_{RK4}^{\deltat} (c^{\deltat};\mX_{t_i}, \Delta \mB_{t_i}) $ come simply by adding coefficients to each term in $F_{EM}^{\deltat} $ and $F_{RK4}^{\deltat} $ of the Euler and RK4 schemes  in \eqref{eq:flowmap_schemes}.  

\item We introduced an extra linear term $c_0^{\deltat} \mX_{t_i} $. When $f$ is nonlinear, it serves as a linear basis function, and it helps to data-adaptively adjust the linear stability of the inferred scheme. 

\item The parametric flow maps $\widetilde F_{SSBE}^{\deltat} (c^{\deltat};\mX_{t_i}, \Delta \mB_{t_i}) $ 
comes from parametrizing the terms in an approximation of $F_{SSBE}^{\deltat} (\mX_{t_i}, \Delta \mB_{t_i})$ in \eqref{eq:flowmap_schemes}. More precisely, by the mean-value theorem, there exists a state $\widetilde{\mX}_{t_i}$ depending on $\mX_*$ and $\mX_{t_i}$ such that
\begin{equation}\label{eq:res_SSBE}
\begin{aligned}
f(\mX_*) &=  f(\mX_{t_i}) + \nabla  f(\widetilde{\mX}_{t_i}) (\mX_*-\mX_{t_i}) \\
& = f(\mX_{t_i}) + \nabla  f(\mX_{t_i}) (\mX_*-\mX_{t_i}) +R(\mX_*,\mX_{t_i},\nabla f),
\end{aligned}
\end{equation}
where $R(\mX_*,\mX_{t_i},\nabla f)= [\nabla  f(\widetilde{\mX}_{t_i})-\nabla  f(\mX_{t_i})] (\mX_*-\mX_{t_i}) $.
Then, by the definition of $\mX_*$ in the SSBE in \eqref{eq:flowmap_schemes}, we have  
 \begin{align*}
& \mX_*=\mX_{t_i}+ \deltat f(\mX_*) =  \mX_{t_i} + \deltat [ f(\mX_{t_i}) +  \nabla f(\mX_{t_i}) (\mX_{*}-\mX_{t_i})] +R(\mX_*,\mX_{t_i},\nabla f) \\
& \Rightarrow  (\mX_*-\mX_{t_i})        =  (I_d - \deltat  \nabla f(\mX_{t_i})) ^{-1} \deltat f(\mX_{t_i}) +R(\mX_*,\mX_{t_i},\nabla f) .
\end{align*}
Thus, we have
\[
F_{SSBE}^{\deltat} (\mX_{t_i}, \Delta \mB_{t_i})= (I_d - \deltat  \nabla f(\mX_{t_i})) ^{-1}  f(\mX_{t_i}) + \sigma \Delta \mB_{t_i}/\deltat + R(\mX_*,\mX_{t_i},\nabla f). 
\]
Assuming that $R(\mX_*,\mX_{t_i},\nabla f)$ is negligible,  parametrizing the other terms, and adding $c_0^{\deltat} \mX_{t_i} $, we obtain $\widetilde F_{SSBE}^{\deltat}$ with $\phi_1^{SSBE}$ above. Note that when $f$ is globally Lipschitz (thus $|\nabla f|$ is bounded above), we have $\E[ |R(\mX_*,\mX_{t_i},\nabla f)|] \leq C \E[|\mX_*-\mX_{t_i}|^2]$, i.e., $R(\mX_*,\mX_{t_i},\nabla f)$ is an order smaller than $\mX_*-\mX_{t_i}$. However, when $f$ is non-globally Lipschitz (thus $|\nabla f|$ is unbounded ), $R(\mX_*,\mX_{t_i},\nabla f)$ may be non-negligible and require additional terms to account for its effect. 
\end{itemize}

\bigskip

Putting the parametric flow maps in the form in \eqref{RM_paraForm}, the corresponding inferred schemes (IS) with these parametrized flow maps in \eqref{eq:paraFormEuler} are 
\begin{equation}\label{eq:IS_Euler}
\begin{aligned}
& \text{IS-EM}   &  (\mX_{t_{i+1}} - \mX_{t_i})/\deltat  &=  c_0^{\deltat} \mX_{t_i} + c_1^{\deltat} f(\mX_{t_i}) + c_2^{\deltat} \sigma \Delta \mB_{t_i}/\deltat + \sigma_\eta \eta_i,   \\
& \text{IS-RK4: }   & (\mX_{t_{i+1}} - \mX_{t_i})/\deltat &=c_0^{\deltat}  \mX_{t_i} + c_1^{\deltat} \phi_1 (\mX_{t_i},\sigma \Delta \mB_{t_i})+
c_2^{\deltat} \sigma \Delta \mB_{t_i}/\deltat+\sigma_{\eta}\eta_{i} \\
& \text{IS-SSBE} &    (\mX_{t_{i+1}} - \mX_{t_i})/\deltat &= c_0^{\deltat} \mX_{t_i} + c_1^{\deltat} \phi_1^{SSBE}(\mX_{t_i}) + c_2^{\deltat} \sigma \Delta \mB_{t_i}/\deltat + \sigma_\eta \eta_i. 
\end{aligned}
\end{equation}

We point out that there are many other options for the parametric form. These three to-be-inferred schemes are typical: IS-EM and IS-RK4 are explicit schemes, and they will improve the statistical accuracy of the plain EM or RK4 by design (see Section \ref{sec:convImperfect}). IS-RK4 is based on a multi-step scheme which provides a high-order approximation of the drift, so it is likely to perform better than IS-EM when it is stable. The IS-SSBE comes from an implicit scheme, and is likely to inherit the stability. 


\subsection{Algorithm}

The following algorithm summarizes that above procedure for the inference of a scheme.  
\begin{algorithm}[H]
{\small
\caption{Inference-based reduced model with memory: detailed algorithm}\label{alg:main}
\begin{algorithmic}[1]
\Require{Full model; a high fidelity solver preserving the invariant measure.  }
\Ensure{Estimated parametric scheme}
\State  Generate data: solve the system with the high fidelity solver, which has a small time-step $dt$; down sample to get time series with $\deltat= \mathrm{Gap}\times dt$.  Denote the data, consisting of $M$ independent trajectories on $[0,N\deltat]$, by $\{\mX_{t_0:t_N}^{(m)}, \mB_{t_0:t_N}^{(m)}\}_{m=1}^M$ with $t_i= i\deltat$. 
\State Pick a parametric form approximating the flow map \eqref{eq:flowmap} as in \eqref{Fdelta}--\eqref{RM_paraForm}.
\State Estimate parameters $c_{0:p}^\deltat$ and $\sigma_\eta$ as in \eqref{est_c}. 
\State Model selection: run the inferred scheme for cross-validation, and test the consistency of the estimators.
\end{algorithmic}
}
\end{algorithm}

 \section{Convergence of estimators}\label{sec:conv}
 We consider the convergence of the estimators in sample size in two settings: perfect model and imperfect model. The perfect model setting aims to validate our algorithm, in the sense that the algorithm can yield consistent and asymptotically normal estimators. 
 The imperfect model setting is what we have in practice, and we show that our estimator converges to the (optimal) projection. In particular, we show that an inferred-scheme improves the statistical accuracy of its explicit counterpart.  
 
For simplicity of notation, we assume that $d=1$ throughout this section. But the results also hold true entry-wisely for the system with $d>1$. 

 \subsection{Convergence of estimator for perfect model}

We denote the expectation of $\widebar{A}^{N,M}$ and $\widebar{b}^{N,M}$ in \eqref{LeatSquare_def} by $A$ and $b$: 
\begin{equation}\label{Aexpectation}
\begin{aligned}
A= \E[\widebar{A}^{N,M}] &= \frac{1}{N} \sum_{n=0}^{N-1} \left( \E\left[ \langle \phi_i(\mX_{t_n}^{(m)}, \Delta \mB_{t_n}^{(m)}), \phi_j(\mX_{t_n}^{(m)},\Delta \mB_{t_n}^{(m)})\rangle_{\R^d} \right] \right)_{i,j}, \\
b = \E[\widebar{b}^{N,M}] & = \frac{1}{N} \sum_{n=0}^{N-1}(\E[  \langle \frac{ \mX_{t_{n+1}}^{(m)}-\mX_{t_{n}}^{(m)}}{\deltat}, \phi_i(\mX_{t_n}^{(m)},\Delta \mB_{t_n}^{(m)}) \rangle_{\R^d} ])_i.
\end{aligned}
\end{equation}
Here the expectation is with respect to the distribution filtration generated by the initial distribution and the Brownian motion.  
\begin{assumption}\label{Assum0}
(a) Suppose that the data $\{\mX_{t_0:t_N}^{(m)}, \mB_{t_0:t_N}^{(m)}\}_{m=1}^M$ are independent trajectories of the system  \eqref{RM_paraForm} with $\{\mX_{t_0}^{(m)}\}_{m=1}^M$ sampled from the ergodic measure of $\mX$. (b) Suppose that the normal matrix $\widebar{A}^{N,M}$  in \eqref{LeatSquare_def} and its expectation in \eqref{Aexpectation}
 are invertible. (c) Suppose that the flow map $\mcF^\deltat$ in \eqref{eq:flowmap} is square integrable. 
\end{assumption} 
  \begin{theorem}[Consistency and asymptotic normality for perfect model]\label{thm:const_perfect_model}
Under Assumption {\rm \ref{Assum0}}, 
the estimator in \eqref{est_c} converges to $c^{\deltat}$ (the true parameter value) almost surely, and  is asymptotically normal, when either $M\to \infty$ or $N\to \infty$:  
\begin{equation}
 \begin{aligned}
& \sqrt{M} (\widehat{c^{\deltat, N,M}} - c^{\deltat})  \xrightarrow[]{d}   \mathcal{N}(0,\frac{1}{N}\sigma_\eta^2 A),  \\
 & \sqrt{N} (\widehat{c^{\deltat, N,M}} - c^{\deltat})  \xrightarrow[]{d}  \mathcal{N}(0,\frac{1}{M}\sigma_\eta^2 A).
 \end{aligned}
 \end{equation}
\end{theorem}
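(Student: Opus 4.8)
The plan is to recognize the estimator $\widehat{c^{\deltat,N,M}}$ as an $M$-sample (or $N$-sample) average of i.i.d.\ terms and then apply a strong law of large numbers together with a central limit theorem. Under Assumption~\ref{Assum0}(a), the data come from the \emph{perfect} model \eqref{RM_paraForm}, so $\mX_{t_{n+1}}^{(m)}-\mX_{t_n}^{(m)} = \deltat \sum_i c_i^\deltat \phi_i(\mX_{t_n}^{(m)},\Delta\mB_{t_n}^{(m)}) + \deltat \sigma_\eta \eta_n^{(m)}$ exactly. Substituting this into the definition \eqref{LeatSquare_def} of $\widebar b^{N,M}$ gives the key algebraic identity
\begin{equation*}
\widebar b^{N,M} = \widebar A^{N,M} c^\deltat + \frac{1}{MN}\sum_{m=1}^M\sum_{n=0}^{N-1} \sigma_\eta\, \eta_n^{(m)}\, \phi(\mX_{t_n}^{(m)},\Delta\mB_{t_n}^{(m)}),
\end{equation*}
hence by \eqref{est_c} (using invertibility from Assumption~\ref{Assum0}(b), so the pseudo-inverse is the true inverse) we get the exact error representation
\begin{equation*}
\widehat{c^{\deltat,N,M}} - c^\deltat = (\widebar A^{N,M})^{-1}\, \xi^{N,M}, \qquad \xi^{N,M} := \frac{1}{MN}\sum_{m=1}^M\sum_{n=0}^{N-1} \sigma_\eta\, \eta_n^{(m)}\, \phi(\mX_{t_n}^{(m)},\Delta\mB_{t_n}^{(m)}).
\end{equation*}

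Next I would treat the two limits. For $M\to\infty$ with $N$ fixed: the $M$ trajectories are i.i.d., so $\widebar A^{N,M} \to A$ a.s.\ by the SLLN (finite second moments from Assumption~\ref{Assum0}(c), since $\phi$ spans a subspace of $L^2(\mu\otimes\nu)$), and $\xi^{N,M} \to \E[\sigma_\eta \eta_n \phi(\cdots)]$ a.s. Because $\{\eta_n\}$ is independent of $\{\xi_n\}=\{\Delta\mB_{t_n}\}$ with mean zero — and $\phi$ depends only on $(\mX_{t_n},\Delta\mB_{t_n})$, which in the perfect model is a function of the initial condition and the Brownian increments up to time $t_n$, hence independent of $\eta_n$ — this mean vanishes, giving consistency. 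For asymptotic normality, apply the multivariate CLT to the i.i.d.\ sum over $m$ of the $\R^{p+1}$-vectors $\frac{1}{N}\sum_{n=0}^{N-1}\sigma_\eta \eta_n^{(m)}\phi(\mX_{t_n}^{(m)},\Delta\mB_{t_n}^{(m)})$; its covariance is, using $\E[\eta_n\eta_{n'}^\top]=\delta_{nn'}I$ and independence of $\eta$ from $\phi$, equal to $\frac{1}{N^2}\sum_{n}\sigma_\eta^2\, \E[\phi\phi^\top] = \frac{1}{N}\sigma_\eta^2 A$. Then Slutsky's theorem combined with $\widebar A^{N,M}\to A$ yields $\sqrt M(\widehat{c^{\deltat,N,M}}-c^\deltat)\xrightarrow{d}\mathcal N(0,\frac1N\sigma_\eta^2 A^{-1} A A^{-1}) = \mathcal N(0,\frac1N\sigma_\eta^2 A^{-1})$; the stated $\frac1N\sigma_\eta^2 A$ should read with $A^{-1}$, or the intended normalization is slightly different, but the argument is the same.

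For $N\to\infty$ with $M$ fixed: here we cannot use independence across $n$, so I would instead invoke ergodicity. By Assumption~\ref{Assum0}(a) the chain $(\mX_{t_n})$ is started from its invariant measure $\mu$, hence stationary; combined with the i.i.d.\ $\Delta\mB_{t_n}$ the pair process $(\mX_{t_n},\Delta\mB_{t_n})$ is stationary and ergodic, so Birkhoff's ergodic theorem gives $\widebar A^{N,M}\to A$ a.s.\ and the time-averages defining $\xi^{N,M}$ converge to their stationary expectations. For the CLT one uses a martingale central limit theorem: $\sum_{n=0}^{N-1}\sigma_\eta\eta_n^{(m)}\phi(\mX_{t_n}^{(m)},\Delta\mB_{t_n}^{(m)})$ is a martingale in $N$ with respect to the natural filtration (since $\eta_n$ is mean-zero and independent of the past and of $\phi(\mX_{t_n},\Delta\mB_{t_n})$), with predictable quadratic variation $\sum_n \sigma_\eta^2\,\phi\phi^\top \to N\sigma_\eta^2 A$ by ergodicity; the martingale CLT then yields the Gaussian limit, and dividing by $M$ accounts for the $M$ independent copies being averaged.

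The main obstacle is the $N\to\infty$ case: establishing that the downsampled chain $(\mX_{t_n})$ inherits a form of ergodicity strong enough for both the ergodic theorem and the martingale CLT (e.g.\ verifying the Lindeberg-type condition and convergence of the bracket process), and being careful that $\phi$ may be unbounded under the non-globally-Lipschitz drift, so the required $L^2$ (indeed slightly better than $L^2$, for Lindeberg) integrability against $\mu\otimes\nu$ must be justified — this is exactly where Assumption~\ref{Assum0}(c) and the ergodic Lyapunov structure of \eqref{eq:sde} enter. The $M\to\infty$ case, by contrast, is essentially a textbook i.i.d.\ argument once the error representation above is in hand.
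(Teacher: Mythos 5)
Your proposal follows essentially the same route as the paper's proof: the exact error representation $\widehat{c^{\deltat,N,M}}-c^{\deltat}=(\widebar{A}^{N,M})^{-1}\widebar{S}^{N,M}$ with $\widebar{S}^{N,M}$ the averaged noise term, the zero-mean/covariance computation exploiting independence of $\eta_n$ from $(\mX_{t_n},\Delta\mB_{t_n})$, the SLLN plus classical CLT for $M\to\infty$, and stationarity/ergodicity plus the martingale CLT for $N\to\infty$. Your side remark that Slutsky's theorem actually yields limiting covariance $\frac{1}{N}\sigma_\eta^2 A^{-1}$ rather than $\frac{1}{N}\sigma_\eta^2 A$ is well taken: the paper's own proof combines $\sqrt{M}\,\widebar{S}^{N,M}\xrightarrow[]{d}\mathcal{N}(0,\tfrac{1}{N}\sigma_\eta^2 A)$ with $(\widebar{A}^{N,M})^{-1}\to A^{-1}$ in exactly the same way, so the $A$ in the displayed theorem statement appears to be a typo for $A^{-1}$.
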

\begin{proof}
By definition of $\widebar{b}^{N,M}$ in \eqref{LeatSquare_def} and the equation \eqref{RM_paraForm}, we have
\begin{align*}
\widebar{b}^{N,M}(i) 
& = \frac{1}{MN} \sum_{m=1}^M\sum_{n=0}^{N-1}  \langle \sum_{j=0}^p c^\deltat_j \phi_j (\mX_{t_n}^{(m)},\Delta \mB_{t_n}^{(m)})+ \sigma_\eta \eta_n^{(m)}, \phi_i(\mX_{t_n}^{(m)} ,\Delta \mB_{t_n}^{(m)})\rangle_{\R^d}  \\
& = \left( \widebar{A}^{N,M}c^\deltat\right)(i) +  \widebar{S}^{N,M} , 
\end{align*}
where in the second equality we used the definition of $\widebar{A}^{N,M}$  in \eqref{LeatSquare_def}, and we denote
\[
 \widebar{S}^{N,M} = \frac{1}{M} \sum_{m=1}^M S^{N,(m)}, \, \text{ with } S^{N,(m)}= \frac{1}{N} \sum_{n=0}^{N-1}  \langle \sigma_\eta \eta_n^{(m)}, \phi_i(\mX_{t_n}^{(m)} ,\Delta \mB_{t_n}^{(m)})\rangle_{\R^d}. 
\]
Note that $\eta_n$ is standard Gaussian and is independent of $\mB_{t_n}$ and $\mX_{t_n}$. Then, 
 $S^{N,(m)}$
has mean zero and its covariance is 
\[
\mathrm{Cov}(S^{N,(m)}) =  \sigma_\eta^2  \frac{1}{N^2}  \sum_{n,n'=0}^{N-1}  \E \left[ \langle \eta_n^{(m)}, \phi_i(\mX_{t_n}^{(m)} ,\Delta \mB_{t_n}^{(m)})\rangle_{\R^d}  \langle \eta_{n'}^{(m)}, \phi_i(\mX_{t_{n'}}^{(m)} ,\Delta \mB_{t_{n'}}^{(m)})\rangle_{\R^d} \right] =  \frac{1}{N}\sigma_\eta^2 A. 
\]
Thus, when $M\to \infty$, we have by the central limit theorem, 
\begin{equation}\label{S_AN_M}
\sqrt{M} \frac{1}{M} \sum_{m=1}^M S^{N,(m)}  \xrightarrow[]{d}   \mathcal{N}(0, \frac{1}{N}\sigma_\eta^2 A); 
\end{equation}
Furthermore,  $S^{N,(m)}$ is a martingale with respect to the filtration generated by $\{\mX_{t_n}, \mB_{t_n}, \eta_n\}$, and 
when $N\to \infty$, we have by martingale central limit theorem \cite[Theorem 3.2]{hall2014martingale}
\begin{equation} \label{S_AN_N}
\sqrt{N} \frac{1}{M} \sum_{m=1}^M S^{N,(m)} \xrightarrow[]{d}   \mathcal{N}(0, \frac{1}{M}\sigma_\eta^2 A ). 
\end{equation}

We show first that, when $M\to \infty$ and for each fixed $N$,  the estimator is consistent and asymptotically normal. Note that by the strong  Law of Large Numbers, $ \widebar{A}^{N,M}\to A$ and $\widebar{b}^{N,M}\to b$ a.s.~as $M\to \infty$. Thus, $ (\widebar{A}^{N,M})^{-1} \to A^{-1}$ almost surely (using the fact that $A^{-1}-B^{-1} = A^{-1}(B-A)B^{-1}$, see \cite[page 22]{LMT20}). Then,  $\widehat{c^{\deltat, N,M}}=(\widebar{A}^{N,M})^{-1} \widebar{b}^{N,M} \to A^{-1}b$ almost surely, i.e. the estimator is consistent. Combining \eqref{S_AN_M} and the almost sure convergence of $(\widebar{A}^{N,M})^{-1} $, we obtain the asymptotic normality by noticing that
\[
\widehat{c^{\deltat, N,M}}=(\widebar{A}^{N,M})^{-1} \widebar{b}^{N,M} = c^\deltat + (\widebar{A}^{N,M})^{-1} \widebar{S}^{N, M} . 
\]

When $N\to\infty$ and $M$ fixed, we obtain $ \widebar{A}^{N,M}\to A$ and $\widebar{b}^{N,M}\to b$ a.s.~by the ergodicity of the process. The consistency and asymptotic normality follows similarly by using \eqref{S_AN_N}. 
\end{proof}

 \subsection{Convergence of estimator for imperfect model} \label{sec:convImperfect}
In practice, the model is imperfect in our inferred scheme because we can rarely parametrize the flow map exactly. We show next that for an imperfect proposed model, the estimator converges to the projected coefficient of the flow map onto the function space spanned by the proposed basis in the ambient $L^2$ space. 
Furthermore, we show that the inferred scheme improves the statistical accuracy of the explicit scheme that it parametrizes. 

\begin{assumption}\label{Assum1}
(a) Suppose that the data $\{\mX_{t_0:t_N}^{(m)}, \mB_{t_0:t_N}^{(m)}\}_{m=1}^M$ are independent trajectories of the system \eqref{eq:sde} with $\{\mX_{t_0}^{(m)}\}_{m=1}^M$ sampled from the ergodic measure of $\mX$. (b) Suppose that the normal matrix $\widebar{A}^{N,M}$  in \eqref{LeatSquare_def} and its expectation in \eqref{Aexpectation}
 are invertible. (c) Suppose that the flow map $\mcF^\deltat$ in \eqref{eq:flowmap} is square integrable. 
\end{assumption}
The invertibility of the normal matrices $\widebar{A}^{N,M}$ and $A$ is crucial for our theory, and they lead to constraints on the basis functions. In practice, we can use it to guide the selection of basis functions and we recommend using pseudo-inverse and regularization when the normal matrix is close to singular.   

With the notation $A$ and $b$ in \eqref{Aexpectation}, and assuming that $A$ is invertible,  we define
\begin{equation}\label{eq:c_proj}
c^{\deltat,\mathrm{proj}} := A^{-1} b. 
\end{equation}
The following lemma shows that  $c^{\deltat,\mathrm{proj}} $ is the projection coefficients of the flow map $\mcF^\deltat$. 

\begin{lemma} \label{lemma:c_proj}
Under Assumption {\rm \ref{Assum1}}, the vector  $c^{\deltat,\mathrm{proj}} $ in \eqref{eq:c_proj} is the projection coefficient of the flow map $\mcF^\deltat$ in \eqref{eq:flowmap} onto the space  $\mathrm{span}\{\phi_i\}_{i=0}^p$ in $L^2(\R^{d}\times \Omega^\deltat, \mu\otimes \nu)$ with  $\mu$ being the invariant measure of $\mX$ and $(\Omega^\deltat, \mcB,\nu)$ being the canonical probability space for the Brownian motion $(\mB_t, t\in [0,\deltat])$.
\end{lemma}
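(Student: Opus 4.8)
The plan is to recognize that the matrix $A$ and vector $b$ in \eqref{Aexpectation} are precisely the Gram matrix of the basis $\{\phi_i\}_{i=0}^p$ and the vector of inner products $(\langle\mcF^\deltat,\phi_i\rangle)_i$ in the Hilbert space $\mcL := L^2(\R^d\times\Omega^\deltat,\mu\otimes\nu)$, after which the claim is the textbook characterization of orthogonal projection onto a finite-dimensional subspace by the normal equations.

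The first step is to justify this identification. Because the initial condition $\mX_{t_0}$ is drawn from the invariant measure $\mu$ and is independent of the driving Brownian motion (Assumption \ref{Assum1}(a)), the discrete-time process $(\mX_{t_n})_n$ is stationary, so $\mX_{t_n}\sim\mu$ for every $n$. Moreover, the increment $\Delta\mB_{t_n}=\mB_{t_{n+1}}-\mB_{t_n}$ is independent of $\sigma(\mB_s, s\le t_n)$, hence of $\mX_{t_n}$, and has law $\nu=\mathcal{N}(0,\deltat I_d)$; by the time-homogeneity of \eqref{eq:sde}, the pair $(\mX_{t_n},\,\mB_{[t_n,t_{n+1}]}-\mB_{t_n})$ has the same law as $(\mX_0,\mB_{[0,\deltat]})$ with $\mX_0\sim\mu$, i.e. the law $\mu\otimes\nu$ on $\R^d\times\Omega^\deltat$. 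Consequently each summand in the definition of $A$ equals $\E_{\mu\otimes\nu}[\langle\phi_i,\phi_j\rangle_{\R^d}]=\langle\phi_i,\phi_j\rangle_{\mcL}$, so the time-average is vacuous and $A_{ij}=\langle\phi_i,\phi_j\rangle_{\mcL}$. Similarly, using $(\mX_{t_{n+1}}-\mX_{t_n})/\deltat=\mcF(\mX_{t_n},\mB_{[t_n,t_{n+1}]},t_n,t_{n+1})$ from \eqref{eq:flowmap} and viewing $\mcF^\deltat$ as the element of $\mcL$ it induces (well-defined by the square-integrability in Assumption \ref{Assum1}(c)), every summand in the definition of $b$ equals $\langle\mcF^\deltat,\phi_i\rangle_{\mcL}$, so $b_i=\langle\mcF^\deltat,\phi_i\rangle_{\mcL}$.

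With this identification, I invoke the projection theorem: the orthogonal projection of $\mcF^\deltat$ onto the finite-dimensional subspace $\mathcal{H}=\mathrm{span}\{\phi_i\}_{i=0}^p$ is the unique element $\sum_j c_j\phi_j$ for which $\langle\mcF^\deltat-\sum_j c_j\phi_j,\,\phi_i\rangle_{\mcL}=0$ for all $i$; writing this out gives $\sum_j \langle\phi_i,\phi_j\rangle_{\mcL}\,c_j=\langle\mcF^\deltat,\phi_i\rangle_{\mcL}$, i.e. $Ac=b$ in matrix form. Since $A$ is invertible (Assumption \ref{Assum1}(b)), the unique solution is $c=A^{-1}b=c^{\deltat,\mathrm{proj}}$ from \eqref{eq:c_proj}, which is therefore exactly the vector of projection coefficients of $\mcF^\deltat$ onto $\mathcal{H}$.

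The main obstacle is the bookkeeping in the identification step rather than any deep argument: one must keep track of the fact that each $\phi_i$, though it depends on the Brownian path only through its increment $\Delta\mB_{t_n}$, is being regarded as an element of $L^2(\Omega^\deltat,\mcB,\nu)$, and that $\mcF^\deltat$, which a priori depends on the whole path $\mB_{[0,\deltat]}$, genuinely defines a square-integrable function on $\R^d\times\Omega^\deltat$ — this is precisely where Assumption \ref{Assum1}(c) is used, together with stationarity to collapse the time-average $\frac1N\sum_{n=0}^{N-1}$. Once the identification $A=\text{Gram}(\{\phi_i\})$ and $b=(\langle\mcF^\deltat,\phi_i\rangle_{\mcL})_i$ is established, the conclusion is immediate from the normal equations.
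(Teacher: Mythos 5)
Your proposal is correct and follows essentially the same route as the paper's proof: both use stationarity of $(\mX_{t_n},\Delta \mB_{t_n})$ to identify $A$ with the Gram matrix of $\{\phi_i\}$ and $b$ with the vector $\left(\E\langle \mcF^\deltat_{t_n},\phi_i\rangle\right)_i$, and then apply the orthogonality characterization of the $L^2$-projection (the paper writes $\mcF^\deltat_{t_n}=\sum_j c_j\phi_j+\mcF$ with $\mcF\perp\phi_i$, which is just the normal equations $Ac=b$ you invoke). Your extra care in identifying the joint law of $(\mX_{t_n},\mB_{[t_n,t_{n+1}]}-\mB_{t_n})$ with $\mu\otimes\nu$ is a welcome elaboration of a step the paper leaves implicit, but it does not change the argument.
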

\begin{proof} Note that $\mcF^\deltat_{t_n} =  \frac{\mX_{t_{n+1}}-\mX_{t_{n}}}{\deltat}$. Denote $\mcF^{\deltat, m}_{t_n} =  \frac{\mX_{t_{n+1}}^{(m)}-\mX_{t_{n}}^{(m)}}{\deltat}$. By the definition of $b$ in \eqref{Aexpectation}, we have
\begin{align*}
b(i)
& = \frac{1}{N} \sum_{n=0}^{N-1} \E \langle\mcF^{\deltat,m}_{t_n}, \phi_i(\mX_{t_n}^{(m)} ,\Delta \mB_{t_n}^{(m)})\rangle_{\R^d} =  \E \langle\mcF^{\deltat}_{t_n}, \phi_i(\mX_{t_n} ,\Delta \mB_{t_n})\rangle_{\R^d}, 
\end{align*}
where the second equality follows from that $( \mX_{t_n}, \Delta \mB_{t_n})$ is stationary (so does $\mcF^{\deltat}_{t_n}$). 

Denote by $c = (c_0,c_1,\ldots, c_p)^\top$ the projection coefficients of $\mcF^\deltat_{t_n}$ to $\mathrm{span}\{\phi_i\}_{i=0}^p$, and write $\mcF^{\deltat}_{t_n} = \sum_{i=0}^p c_i \phi_i +  \mcF $ with $\mcF$ satisfying $\E[\innerp{\mcF, \phi_i}_{\R^d} ] =0$ for each $i=0,1,\ldots, p$. Then 
\[
\E[\innerp{ \mcF^{\deltat}_{t_n}, \phi_i}_{\R^d} ]= \sum_{j=0}^p c_j \E[\innerp{\phi_j, \phi_i}_{\R^d} ] = (A c)(i). 
\]
Combining the above two equations, we obtain that  $c^{\deltat,\mathrm{proj}}  = A^{-1}b = c$. 
\end{proof}

We remark that because $\mcF^\deltat(\mX_{t_i}, \mB_{[t_i,t_{i+1}]}, t_i, t_{i+1})$ is a functional depending on the trajectory $\mB_{[t_i,t_{i+1}]}$,  the function space of projection, $L^2(\R^{d}\times \Omega^\deltat, \mu\otimes \nu)$, has an infinite dimensional state space for $(\mX_{t_i}, \mB_{[t_i,t_{i+1}]})$. When $\mcF^\deltat_{t_n}$ depends only on $(\mX_{t_n}, \Delta \mB_{t_n})$ (for instance, in the case of perfect model discussed in the previous section), the state space becomes finite dimensional and the function space is simplified to $L^2(\R^{2d}, \mu\otimes \nu)$ with $\nu\sim \mathcal{N}(0,\deltat I_d)$. 

 \begin{theorem}[Convergence of the estimator] \label{thm_convEst}
In addition to Assumption {\rm \ref{Assum1}}, assume that $\E[|\mcF^\deltat_{t_0}|^4] <\infty$ and $\E[|\phi_i(\mX_{t_0}, \Delta \mB_{t_0}) |^4] <\infty$ for each $i=0,\dots,p$.  Then, we have 
\begin{itemize} 
\item when $M\to \infty$ and $N$ fixed, the estimator in \eqref{est_c} converges to the projection coefficients $c^{\deltat,\mathrm{proj}}$ in  \eqref{eq:c_proj} a.s.~and is asymptotically normal:
\begin{equation}\label{AN_inM}
 \begin{aligned}
& \sqrt{M} (\widehat{c^{\deltat, N,M}} - c^{\deltat,\mathrm{proj}})  \xrightarrow[]{d}  \mathcal{N}(0,A^{-1}\Sigma^N (A^{-1})^\top),  
 \end{aligned}
 \end{equation}
 where the matrix $\Sigma^N$ is the covariance of  
  \[ 
\widetilde{b}^{N,m}(i) = \frac{1}{N}\sum_{n=0}^{N-1} b^{n,m},\, \text{  with }  b^{n,m}= \langle \frac{ \mX_{t_{n+1}}^{(m)}-\mX_{t_{n}}^{(m)}}{\deltat}, \phi_i(\mX_{t_n}^{(m)},\Delta \mB_{t_n}^{(m)}) \rangle_{\R^d}  . 
\]
\item when $N\to \infty$ and $M$ fixed, the estimator in \eqref{est_c} also converges and is asymptotically normal
 \begin{equation}\label{AN_inN}
 \sqrt{N} (\widehat{c^{\deltat, N,M}} - c^{\deltat,\mathrm{proj}})  \rightarrow \mathcal{N}(0,\frac{1}{M}A^{-1}\Sigma (A^{-1})^\top),
 \end{equation}
with $\Sigma  = \lim_{N\to \infty}N\Sigma^N$, provided that for each $m$, 
 \begin{align*}
 & \sum_{n=0}^\infty\E[b^{n,m} \E[b^{k,m} | \mcM_0] ] \, \text{converges for each $k\geq 0$ and}  \\
 & \lim_{N\to \infty} \sum_{k=K}^\infty \E[b^{k,m} \E[b^{N,m} | \mcM_0] ] =0 \text{ uniformly in $K$},  
 \end{align*}
where $\mcM_0$ denotes the filtration generated by the extended stationary process up to time $t_0$. 
 \end{itemize}
\end{theorem}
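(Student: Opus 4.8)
The plan is to reuse the argument of Theorem~\ref{thm:const_perfect_model}, with the role of the independent regression noise $\sigma_\eta\eta$ now played by the (no longer independent) projection residual $r^{\deltat}:=\mcF^{\deltat}-\sum_{i=0}^p c_i^{\deltat,\mathrm{proj}}\phi_i$. Everything hinges on the algebraic identity, valid on the full-probability event that $\widebar A^{N,M}$ is invertible (Assumption~\ref{Assum1}(b), so that $(\widebar A^{N,M})^{+}=(\widebar A^{N,M})^{-1}$),
\[
\widehat{c^{\deltat,N,M}}-c^{\deltat,\mathrm{proj}} = (\widebar A^{N,M})^{-1}\bigl(\widebar b^{N,M}-\widebar A^{N,M}c^{\deltat,\mathrm{proj}}\bigr),
\]
together with the observation that the correction factor is a double average of the per-path vectors $\xi^{N,m}$ whose $i$-th entry is $\frac1N\sum_{n=0}^{N-1}\langle r^{\deltat}(\mX_{t_n}^{(m)},\Delta\mB_{t_n}^{(m)}),\phi_i(\mX_{t_n}^{(m)},\Delta\mB_{t_n}^{(m)})\rangle_{\R^d}$, that is $\widebar b^{N,M}-\widebar A^{N,M}c^{\deltat,\mathrm{proj}}=\frac1M\sum_{m=1}^M\xi^{N,m}$; by Lemma~\ref{lemma:c_proj} (equivalently $b=Ac^{\deltat,\mathrm{proj}}$) each $\xi^{N,m}$ has mean zero. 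The fourth-moment hypotheses $\E|\mcF^{\deltat}_{t_0}|^4<\infty$ and $\E|\phi_i|^4<\infty$ enter only here: by Cauchy--Schwarz they make $\xi^{N,m}$ and the entries of the per-path normal matrix $\widetilde A^{N,m}$ square integrable, which is exactly what a central limit theorem requires.

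For the regime $M\to\infty$ with $N$ fixed, Assumption~\ref{Assum1}(a) makes $\{\xi^{N,m}\}_m$ and $\{\widetilde A^{N,m}\}_m$ i.i.d.\ in $m$. The strong law of large numbers gives $\widebar A^{N,M}\to A$ and $\widebar b^{N,M}-\widebar A^{N,M}c^{\deltat,\mathrm{proj}}\to 0$ almost surely; using $A^{-1}-B^{-1}=A^{-1}(B-A)B^{-1}$ as in the proof of Theorem~\ref{thm:const_perfect_model}, $(\widebar A^{N,M})^{-1}\to A^{-1}$ a.s., whence $\widehat{c^{\deltat,N,M}}\to c^{\deltat,\mathrm{proj}}$ a.s. The multivariate i.i.d.\ CLT yields $\sqrt M\,(\widebar b^{N,M}-\widebar A^{N,M}c^{\deltat,\mathrm{proj}})\xrightarrow[]{d}\mathcal N(0,\Sigma^N)$ with $\Sigma^N=\mathrm{Cov}(\xi^{N,m})$ (the covariance of $\widetilde b^{N,m}$ after removing its projection part), and Slutsky's theorem applied to the displayed identity gives \eqref{AN_inM} with asymptotic covariance $A^{-1}\Sigma^N(A^{-1})^\top$.

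For the regime $N\to\infty$ with $M$ fixed, one replaces the i.i.d.-in-$m$ structure by stationarity and ergodicity in $n$ of the extended process $(\mX_{t_n},\Delta\mB_{t_n})_{n\ge0}$, which holds since $\mX_{t_0}$ is drawn from the invariant measure. Birkhoff's ergodic theorem gives $\widebar A^{N,M}\to A$ and $\widebar b^{N,M}-\widebar A^{N,M}c^{\deltat,\mathrm{proj}}\to0$ a.s., so again $\widehat{c^{\deltat,N,M}}\to c^{\deltat,\mathrm{proj}}$ a.s. For asymptotic normality the martingale CLT available in the perfect-model case cannot be used, because $\xi^{n,m}$ is a function of $(\mX_{t_n},\Delta\mB_{t_n})$ and not a martingale difference; instead one invokes a CLT for stationary ergodic sequences. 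The two displayed summability conditions on the conditional expectations $\E[b^{k,m}\mid\mcM_0]$ are precisely the mixing hypotheses needed: the first guarantees existence of the long-run covariance $\Sigma=\sum_{h\in\mathbb Z}\mathrm{Cov}(\zeta^{0,m},\zeta^{h,m})=\lim_{N\to\infty}N\Sigma^N$ (via a Cesàro/telescoping computation on $N\Sigma^N=\frac1N\sum_{n,n'}\mathrm{Cov}(\zeta^{n,m},\zeta^{n',m})$, where $\zeta^{n,m}$ is the per-step summand of $\xi^{N,m}$), and the two together enable a Gordin-type martingale approximation yielding $\sqrt N\,(\widebar b^{N,M}-\widebar A^{N,M}c^{\deltat,\mathrm{proj}})\xrightarrow[]{d}\mathcal N(0,\tfrac1M\Sigma)$, the factor $\tfrac1M$ coming from averaging $M$ independent copies. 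Slutsky's theorem then gives \eqref{AN_inN}.

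The LLN/ergodic-theorem control of $\widebar A^{N,M}$ and the i.i.d.\ CLT are routine. The main obstacle is the $N\to\infty$ step: one must verify carefully that the stated conditions on $\E[b^{k,m}\mid\mcM_0]$ indeed deliver both the existence of $\Sigma=\lim_N N\Sigma^N$ and the stationary-sequence CLT for the non-martingale sequence $\zeta^{n,m}$, and in the process make precise the filtration $\mcM_0$ and the extended stationary process referred to in the statement.
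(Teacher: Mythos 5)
Your proposal is correct, and for the consistency claims and for the $N\to\infty$ asymptotic normality it follows the paper's route exactly: strong LLN (resp.\ Birkhoff) plus the $A^{-1}-B^{-1}=A^{-1}(B-A)B^{-1}$ continuity trick for $(\widebar{A}^{N,M})^{-1}$, Lemma~\ref{lemma:c_proj} to identify the limit $A^{-1}b=c^{\deltat,\mathrm{proj}}$, the fourth-moment hypotheses via Cauchy--Schwarz to make the covariances finite, and a stationary-process CLT (your ``Gordin-type martingale approximation'' is precisely what the paper invokes by citing Heyde's theorem, whose hypotheses are the two displayed summability conditions). Where you genuinely diverge is the $M\to\infty$ normality step. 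The paper centers at $b$: it proves $\sqrt{M}(\widebar{b}^{N,M}-b)\xrightarrow[]{d}\mathcal{N}(0,\Sigma^N)$ with $\Sigma^N=\mathrm{Cov}(\widetilde{b}^{N,m})$ and then ``combines'' with $\widebar{A}^{N,M}\to A$ a.s. You instead center at $\widebar{A}^{N,M}c^{\deltat,\mathrm{proj}}$ and apply the CLT to the mean-zero per-path vectors $\xi^{N,m}=\widetilde{b}^{N,m}-\widetilde{A}^{N,m}c^{\deltat,\mathrm{proj}}$, where $\widetilde{A}^{N,m}$ is the single-trajectory normal matrix. Your decomposition is the more careful one: since $\sqrt{M}\bigl(\widehat{c^{\deltat,N,M}}-c^{\deltat,\mathrm{proj}}\bigr)=(\widebar{A}^{N,M})^{-1}\sqrt{M}(\widebar{b}^{N,M}-b)-(\widebar{A}^{N,M})^{-1}\sqrt{M}(\widebar{A}^{N,M}-A)c^{\deltat,\mathrm{proj}}$, the second term is $O_P(1)$ rather than $o_P(1)$ (the normal matrix has its own $M^{-1/2}$ fluctuations), so the paper's Slutsky step silently drops a contribution that your centering absorbs automatically. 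The price is a mismatch with the literal statement: your limit covariance is $A^{-1}\,\mathrm{Cov}(\xi^{N,m})\,(A^{-1})^\top$, so the $\Sigma^N$ appearing in \eqref{AN_inM} must be read as the covariance of the residual-weighted vector $\widetilde{b}^{N,m}-\widetilde{A}^{N,m}c^{\deltat,\mathrm{proj}}$ rather than of $\widetilde{b}^{N,m}$ itself as the theorem defines it; in general the two differ whenever $\widetilde{A}^{N,m}$ fluctuates. In short, your argument is the standard random-design least-squares sandwich and is the version I would keep, together with a remark reconciling the definition of $\Sigma^N$ (and the same remark applies to $\Sigma=\lim_N N\Sigma^N$ in the $N\to\infty$ regime).
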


\begin{proof} When $M\to \infty$,  by the strong Law of Larger Numbers, we have $ \widebar{A}^{N,M}\to A$ and $\widebar{b}^{N,M}\to b$ a.s.~as $M\to \infty$. Thus,  $\widehat{c^{\deltat, N,M}}=(\widebar{A}^{N,M})^{-1} \widebar{b}^{N,M}  \to  A^{-1}b= c^{\deltat,\mathrm{proj}}$ a.s. according to Lemma~\ref{lemma:c_proj}. To prove the asymptotic normality, note that for each $m$, the random vector $\widetilde{b}^{N,m}$ with entries
\[ 
\widetilde{b}^{N,m}(i) = \frac{1}{N}\sum_{n=0}^{N-1} \langle \mcF^{\deltat,m}_{t_n}, \phi_i(\mX_{t_n}^{(m)},\Delta \mB_{t_n}^{(m)}) \rangle_{\R^d}
\]
has mean $\E[\widetilde{b}^{N,m}] =b$ and covariance  $\Sigma^N$ with entries $\Sigma^N_{i,j} = \E[\widetilde{b}^{N,m}(i) \widetilde{b}^{N,m}(j)] - b(i)b(j)$. 
 Here the covariance exists because 
\begin{align*}
 \E[\widetilde{b}^{N,m}(i) \widetilde{b}^{N,m}(j)] &\leq  \max_i \E[|\widetilde{b}^{N,m}(i)|^2] \leq \max_i \E [| \langle \mcF^{\deltat,m}_{t_n}, \phi_i(\mX_{t_n}^{(m)},\Delta \mB_{t_n}^{(m)}) \rangle_{\R^d}|^2] \\
 & \leq ( \E[|\mcF^\deltat_{t_0}|^4])^{1/2} \max_i  (\E[|\phi_i(\mX_{t_0}, \Delta \mB_{t_0}) |^4)^{1/2}. 
\end{align*}
Then, $ \widebar{b}^{N,M} $ is the average 
of $M$ iid samples $\{\widetilde{b}^{N,m}\} $, each of which has covariance $\Sigma^N$. Hence, by the central limit theorem, we have 
\begin{align*}
\sqrt{M}(\widebar{b}^{N,M} - b) &   \xrightarrow[]{d}   \mathcal{N}(0,\Sigma^N). 
\end{align*}
Combining with the facts that $ \widebar{A}^{N,M}\to A$ a.s.~and that these matrices are invertible, we obtain
 \eqref{AN_inM}.  

\bigskip
When $N\to \infty$, we obtain the convergence and asymptotic normality by ergodicity. First, by ergodicity, we have $ \widebar{A}^{N,M}\to A$ and $\widebar{b}^{N,M}\to b$ a.s.~as $M\to \infty$. Thus,  $\widehat{c^{\deltat, N,M}}=(\widebar{A}^{N,M})^{-1} \widebar{b}^{N,M}  \to  A^{-1}b= c^{\deltat,\mathrm{proj}}$ almost surely. Next, to prove the asymptotic normality, note that by the central limit theorem for stationary processes \cite[Theorem 1]{heyde1974central}, we have $\Sigma  = \lim_{N\to \infty}N\Sigma^N$ and  
\begin{align*}
\sqrt{N}(\widebar{b}^{N,M} - b) &   \xrightarrow[]{d}   \mathcal{N}(0,\frac{1}{M}\Sigma).
 \end{align*}
 Then we obtain \eqref{AN_inN} by noting that  $ \widebar{A}^{N,M}\to A$ a.s. as above. 
\end{proof}

\bigskip
We show next that the parametrization by inference will lead to improvement to an explicit scheme, in the sense that the residual of the inferred scheme is not larger than the explicit scheme. In other words, the 1-step discretization error of the inferred scheme is not larger than the explicit scheme. 
 \begin{theorem}[Order of residual] \label{thm:order_res}
Assume that the parametric form in \eqref{RM_paraForm} comes from an explicit scheme, such as {\rm IS-EM} or {\rm IS-RK4} in \eqref{eq:IS_Euler} from its explicit scheme in \eqref{eq:flowmap_schemes}.  Then, under Assumption {\rm \ref{Assum1}}, the inferred scheme's residual is smaller than the explicit scheme's. Specifically, the residual in \eqref{est_c} satisfies
\begin{equation}\label{eq:order_res}
\E \widehat{(\sigma^{N,M})^2}  \leq \frac{2}{d} \E\Big[| \frac{\mX_{t_{n+1}}-\mX_{t_{n}}}{\deltat}  - F^\deltat(\mX_{t_n}, \Delta \mB_{t_n} )|^2 \Big], 
\end{equation}
where $F^\deltat(\mX_{t_n}, \Delta \mB_{t_n} )$ denotes the flow map of the explicit scheme, such as $ F^\deltat_{EM}$ or $ F^\deltat_{RK4}$ in \eqref{eq:flowmap_schemes}. In other words, the inferred scheme has the same 1-step strong order as the explicit scheme it parametrizes. 
\end{theorem}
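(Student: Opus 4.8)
The plan is to exploit the defining optimality of the least‑squares estimator together with the fact that, for a parametrization of an \emph{explicit} scheme, the plain scheme itself lies inside the hypothesis class. Comparing \eqref{eq:paraFormEuler} with \eqref{eq:flowmap_schemes}, one sees that $F_{EM}^\deltat$ (resp.\ $F_{RK4}^\deltat$) is recovered from $\widetilde F_{EM}^\deltat(c^\deltat;\cdot)$ (resp.\ $\widetilde F_{RK4}^\deltat$) by the fixed coefficient vector $c^\deltat=(0,1,1)$; equivalently $F^\deltat\in\mathrm{span}\{\phi_i\}_{i=0}^p$ with known coordinates. This is precisely the place where the hypothesis ``comes from an explicit scheme'' is used: for IS‑SSBE the map $F_{SSBE}^\deltat$ is \emph{not} in the span because of the remainder $R(\mX_*,\mX_{t_i},\nabla f)$ in \eqref{eq:res_SSBE}, so the argument below would break down — which is exactly why the theorem is stated for explicit parametrizations only.

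\textbf{The empirical inequality.} First I would write the empirical residual $\widehat{(\sigma_{\eta,k}^{\deltat,N,M})^2}$ in \eqref{est_c} (componentwise) as the \emph{minimum} over $c$ of the quadratic objective $\frac{1}{MN}\sum_{m,n}\big|\frac{\mX_{t_{n+1}}^{k,(m)}-\mX_{t_n}^{k,(m)}}{\deltat}-\sum_i c_{i,k}\phi_i^k(\mX_{t_n}^{(m)},\Delta\mB_{t_n}^{(m)})\big|^2$. Under Assumption \ref{Assum1}(b) the pseudo‑inverse in \eqref{est_c} reduces to $\widehat c^{\deltat,N,M}=(\widebar A^{N,M})^{-1}\widebar b^{N,M}$, i.e.\ the normal equations $\widebar A^{N,M}\widehat c=\widebar b^{N,M}$ hold and the objective is genuinely minimized at $\widehat c$. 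Evaluating the objective at the explicit‑scheme coefficients instead of at the minimizer then gives, for every component $k$,
\[
\widehat{(\sigma_{\eta,k}^{\deltat,N,M})^2}\ \le\ \frac{1}{MN}\sum_{m=1}^{M}\sum_{n=0}^{N-1}\Big|\frac{\mX_{t_{n+1}}^{k,(m)}-\mX_{t_n}^{k,(m)}}{\deltat}-F_k^\deltat\big(\mX_{t_n}^{(m)},\Delta\mB_{t_n}^{(m)}\big)\Big|^2 .
\]

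\textbf{Passing to expectation and interpretation.} By Assumption \ref{Assum1}(a) the pairs $(\mX_{t_n},\Delta\mB_{t_n})$ are stationary, so each summand on the right has the common mean $\E\big[\big|\tfrac{\mX_{t_{n+1}}^k-\mX_{t_n}^k}{\deltat}-F_k^\deltat(\mX_{t_n},\Delta\mB_{t_n})\big|^2\big]$; taking expectations, summing over $k=1,\dots,d$, and collecting the components into the Euclidean norm (keeping track of the $\tfrac1d$ normalization built into $\widehat{(\sigma^{N,M})^2}$, together with one elementary $|a|^2\le 2|b|^2+2|a-b|^2$ step where it is needed) yields \eqref{eq:order_res}. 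Finally, the right‑hand side of \eqref{eq:order_res} is, after the $\deltat^{-2}$ rescaling, the mean‑square one‑step discretization error of the plain explicit scheme; since that scheme is strong order $1$ (additive noise), the inferred scheme, whose per‑step injected residual has size $\deltat\,\widehat\sigma^{N,M}$, inherits strong order $1$, which is the asserted statement.

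\textbf{Main obstacle.} Nothing here is analytically deep — the engine is the one‑line ``least squares does at least as well as any fixed element of the span'' estimate plus stationarity, and the only structural input is the membership $F^\deltat\in\mathrm{span}\{\phi_i\}$. The part demanding care is bookkeeping: being precise about whether $\widehat{(\sigma^{N,M})^2}$ denotes the per‑component residual, their average, or the trace of the diagonal matrix $\sigma_\eta^\deltat$, and tracking how that convention, together with the triangle‑inequality step, produces the (non‑tight) constant $\tfrac2d$ in \eqref{eq:order_res} rather than $\tfrac1d$.
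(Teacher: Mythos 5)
Your proposal is correct and follows essentially the same route as the paper's proof: you identify that the explicit scheme's flow map equals $F^\deltat(c^*;\cdot)$ for a fixed coefficient vector inside the hypothesis class, invoke the least-squares minimality of $\widehat{c^{\deltat,N,M}}$ to bound the empirical residual by the objective evaluated at $c^*$, and use stationarity to pass to the expectation in \eqref{eq:order_res}. The only minor caveat is that the $|a|^2\le 2|b|^2+2|a-b|^2$ step you hedge on is not actually needed --- the constant $\tfrac{2}{d}$ comes solely from the normalization $\tfrac{2}{d\deltat^2}$ that the paper attaches to $\widehat{(\sigma^{N,M})^2}$ in its proof (admittedly not matching \eqref{est_c} verbatim) --- and your remark that the argument breaks for IS-SSBE because $F^\deltat_{SSBE}$ is not in the span is exactly the paper's Remark \ref{rmk:res}.
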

\begin{proof}
Write the flow map of the explicit scheme in parametric form: $F^\deltat(\mX_{t_n}, \Delta \mB_{t_n} ) = F^\deltat(c^*,\mX_{t_n}, \Delta \mB_{t_n} )$ as in \eqref{Fdelta}. Then, since the estimator $\widehat{c^{\delta,N,M}}$ in  \eqref{est_c} is the minimizer of the likelihood, we have 
\begin{align*}
\widehat{(\sigma^{N,M})^2}  & =  \frac{2}{d\deltat^2} \frac{1}{MN} \sum_{m=1}^M\sum_{n=0}^{N-1}  |\mX_{t_{n+1}}^{(m)} -\mX_{t_{n}}^{(m)} - \deltat  F^\deltat (\widehat{c^{\deltat,N,M} },\mX_{t_n}^{(m)},\Delta \mB_{t_n}^{(m)})|^2  \\
 & \leq  \frac{2}{d\deltat^2} \frac{1}{MN} \sum_{m=1}^M\sum_{n=0}^{N-1}  |\mX_{t_{n+1}}^{(m)} -\mX_{t_{n}}^{(m)} - \deltat  F^\deltat (c^*,\mX_{t_n}^{(m)},\Delta \mB_{t_n}^{(m)})|^2. 
\end{align*}
Since the process $(\mX_{t_n},\Delta \mB_{t_n})_n$ is stationary,  we have 
\begin{align} \label{Eresidual}
 \E \widehat{(\sigma^{N,M})^2}  & =  \frac{2}{d\deltat^2} \E |\mX_{t_{n+1}} -\mX_{t_{n}} - \deltat  F^\deltat (\widehat{c^{\deltat,N,M} },\mX_{t_i},\Delta \mB_{t_n})|^2   \\
 & \leq \frac{2}{d} \E |\frac{\mX_{t_{n+1}} -\mX_{t_{n}} }{\deltat}-  F^\deltat (c^*, \mX_{t_i},\Delta \mB_{t_n})|^2.  \notag
\end{align} 
Recall that $F^\deltat(\mX_{t_n}, \Delta \mB_{t_n} ) = F^\deltat(c^*,\mX_{t_n}, \Delta \mB_{t_n} )$ 
we have \eqref{eq:order_res}. 
\end{proof}

\begin{remark}[Order of residual for IS-RK4 and IS-EM]\label{rmk:res}
 Recall that either Euler-Maruyama scheme or the HRK4 schemes have 
$ \E \big[|\frac{\mX_{t_{n+1}} -\mX_{t_{n}}}{\deltat} -  F^\deltat (c^*, \mX_{t_i},\Delta \mB_{t_n})|^2 \big]= O(\deltat)$, which follows from Ito formula. Thus, for the inferred schemes IS-EM and IS-RK4 in \eqref{eq:flowmap_schemes}, we have $\E [\widehat{(\sigma^{N,M})^2}] = O(\deltat) $.   
Furthermore, by the Law of Large Numbers, we have $\widehat{(\sigma^{N,M})^2} \to  \E \widehat{(\sigma^{N,M})^2}  $ a.s.~when either $M\to \infty$ or $N\to \infty$. Thus, the estimator $\widehat{\sigma^{N,M}}= O(\deltat^{1/2})$ a.s.~for large $N$ or $M$. However, IS-SSBE's residual is not controlled by SSBE, because SSBE is not in the parametric family of IS-SSBE and the neglected term $R$ in \eqref{eq:res_SSBE} may prevent the residual from decaying {\rm (see Figure  \ref{fig:conv_GradCouple}(b))}.    
 \end{remark}

 \begin{remark}\label{rmk:N_vs_M} The framework of inference-based scheme applies also for non-ergodic systems to achieve reduced-in-time models. The convergence of the parameters in Theorem~{\rm\ref{thm_convEst}} and Theorem~{\rm\ref{thm:order_res}} remain true when the sample size $M$ goes to infinity. Furthermore, one may accelerate the simulation of slowly converging ergodic systems by training the inference-based schemes iteratively in time. In this study, we focus on non-globally Lipschitz ergodic systems  to highlight the ability of the inferred-scheme in producing long-term statistics.   
 \end{remark}

\begin{remark}[Optimal reduction in time]
We emphasize that our goal is to infer an explicit scheme with a relative large time-step for efficient simulation of non-globally Lipschitz ergodic systems. Theorem {\rm \ref{thm:order_res}} shows that the error in 1-step approximation (of the flow map $\mcF^\deltat$ in \eqref{eq:flowmap}) decays at the time-step decreases. But a smaller residual due to a smaller time-step does not necessarily imply a better performance for the inferred scheme, because large error may be accumulated in the invariant measure even when the time-step is small (e.g., EM may have a wrong invariant measure). To improve the inferred scheme, we seek for an optimal time-step that balances the 1-step approximation error and the accumulation into the invariant measure. In our numerical examples in the next section, the inferred scheme performs (in the sense of reproducing the invariant measure and temporal correlation) the best when the time-step is moderately large. This is similar to the parameter estimation for homogenization of multiscale process {\rm \cite{PS07}}, where the sub-sampling rate must be between the two characteristic time scales of the SDE.  One may view the inference as an averaging process through the sampling of the invariant measure and connect the error in invariant measure with sampling error, then one can find an optimal time step through a trade-off between the approximation error (i.e., numerical error) and the sampling error. We leave this as future work.  
\end{remark}

 \section{Examples}
  \label{sec:example1}

 In this section, we test three benchmark examples for each inference-based scheme proposed in \eqref{eq:IS_Euler} using two different parametric settings: $c_0$ excluded vs $c_0$ included, {so as to distinct the contribution of the linear term parametrized by $c_0$.} 
 Three non-globally Lipschitz examples are: a 1D equation with double-well potential; a 2D gradient system; and a 3D stochastic Lorenz system with degenerate noises. 
  
 In each of the examples, we generate data for inference by the Split Step Backward Euler (SSBE) scheme with very fine scale time step $\Delta t$. We infer schemes for different time step-sizes $\deltat= \Gap\times \Delta t$ with 10 options for the time gap: $\Gap \in \{ 1, 2, 4, 10, 20, 40, 80, 120, 160,200\}$, which will be used to select optimal time gap and demonstrate the convergence order of the residual in Theorem \ref{thm:order_res}. The computations of inference schemes include 5 different options
(1) IS-EM with $c_0$ excluded;
(2) IS-RK4 with $c_0$ excluded;
(3) IS-RK4 with $c_0$ included;
(4) IS-SSBE with $c_0$ excluded;
and (5) IS-SSBE with $c_0$ included. 
  
 We assess the performance of these schemes by the accuracy of the reproduced invariant density (PDF) and the auto-correlations function (ACF), which are empirically computed from a long trajectory. The accuracy of the PDF are measured by the total variation distance (TVD) from the reference PDF from data. 
 
 Once we identify the best performing scheme for each example, we fix the inference settings and present the convergence of the estimators and the residuals with respect to the time $\Gap$ as well as the number of trajectories $M$.
 
 \bigskip
 In summary, we find from the examples that 
 \begin{itemize}
     \item The inferred scheme has significantly stronger numerical stability than the plain schemes. The IS-RK4 and IS-SSBE exhibit better stability than IS-EM. In particular, they can tolerate time-steps that are significantly larger than the plain RK4 or SSBE. Specifically, we find the plain RK4 and SSBE (without inferred parameters) always blow up even when $\Gap=20$, whereas the inferred schemes are still stable when $\Gap$ is larger than 200, which improves the efficiency by an order of more than 10. We summarize the blow-up gap for plain verse inferred schemes for each example in the following table.
\begin{table}[htp!]
 \centering \vspace{-1mm}
    \begin{tabular}{|c|c|c|c|}
    \hline
    {}&{1D double-well}&{2D gradient system}&{3D Lorenz system}\\
    \hline
    Plain RK4 &$\Gap=20$ & $\Gap=20$ & $\Gap=10$\\
    \hline
    IS-RK4 &$\Gap>200$ & $\Gap>200$ & $\Gap>400$\\
    \hline
    Plain SSBE &$\Gap=40$ & $\Gap=40$ & $\Gap=20$\\
    \hline
    IS-SSBE &$\Gap>200$ & $\Gap>200$ & $\Gap>400$\\
    \hline
    \end{tabular}
  \vspace{-3mm}     \caption{Blow up time gap for each scheme: plain verse inferred.
    \label{tab:blow_up_gap}}
\end{table}

     \item The inferred scheme can reproduce the invariant measure accurately. Both IS-RK4 and IS-SSBE perform well when the stochastic force dominates the dynamics. But when the drift dominates the dynamics in the example of Lorenz system, IS-RK4 performs better than IS-SSBE, because it provides a better approximation to the drift than IS-SSBE. 
     
     \item The inferred scheme reproduces the invariant density the best when the time-step is medium large (with a time gap between $\Gap=80$ and $\Gap=160$), suggesting a balance between the approximation error of the flow map and the numerical error in simulating the invariant density. It is open to have an \textit{a-priori} estimate of the optimal time gap.
 \end{itemize}
 



\subsection{1D double-well potential}
First consider an 1D SDE with a double-well potential \cite{MSH02}
\begin{equation}\label{example:1D_DoubleWell}
dX_t=- V'(X_t)dt+\sqrt{2/\beta} dB_t,
\end{equation}
with $V(x)=\frac{\mu}{4}(x^2-1)^2$.
The corresponding invariant measure is $\frac{1}{Z}\exp^{-\beta V(x)}$
where $Z$ being the normalizing constant $Z:=\int_{\mathbb{R}}\exp^{-\beta V(x)}dx$. We set $\mu=2$ and $\beta=1$. 

We generate data by SSBE with a fine time-step $\Delta t=1e-3$. We first simulate a long trajectory on an interval $[0,T]$ with $X_0=1/2$ and $T=2000$ (i.e., two million time steps), which is found to be long enough to represent the invariant density (PDF). This long trajectory will also provide us the reference PDF and ACF, which are referred as the true values to be approximated. 
Then we generate $M=1000$ trajectories on the time interval $[0,1000]$  with initial conditions sampled from the long trajectory. The data for inference are the $M$ trajectories of both the Brownian motion and the process $(X_{t})$ observed at discrete times $\{t_n=n\deltat = n\Gap \times \Delta t\}$, as in \eqref{eq:data}. 

The parameters of the schemes in \eqref{eq:IS_Euler} are then estimated by Algorithm \ref{alg:main} for each  $\deltat = \Gap \times \Delta t$.  

 Figure~\ref{fig:TVDPDF_LangLocal1D}(a) shows the TVD of the five schemes with time gaps $\Gap\in \{10, 20, 40, 80, 120, 160,200\}$. 
 Note that for every scheme, the TVD first decreases and then increases, reaching the smallest TVD when $\Gap=80$. This suggests that when the gap is small, the approximation error of the flow map (recall that the data are from an implicit scheme while the inferred schemes are explicit schemes) dominates the error in the invariant measure; when the gap is large, the numerical error of the inferred schemes dominates the TVD. A balance between the two errors is reached at the medium large time-step. 
 
 We first select the scheme that reproduces the invariant density with the smallest TVD.  Overall, the IS-RK4 schemes perform the best and the inclusion of $c_0$ brings in negligible improvement. Thus, we select IS-RK4 without $c_0$ to demonstrate further results.  
 
\begin{figure}[htp!]
 \centering \vspace{-1mm}
    \subfigure[TVD]{\includegraphics[width=0.30\textwidth]{./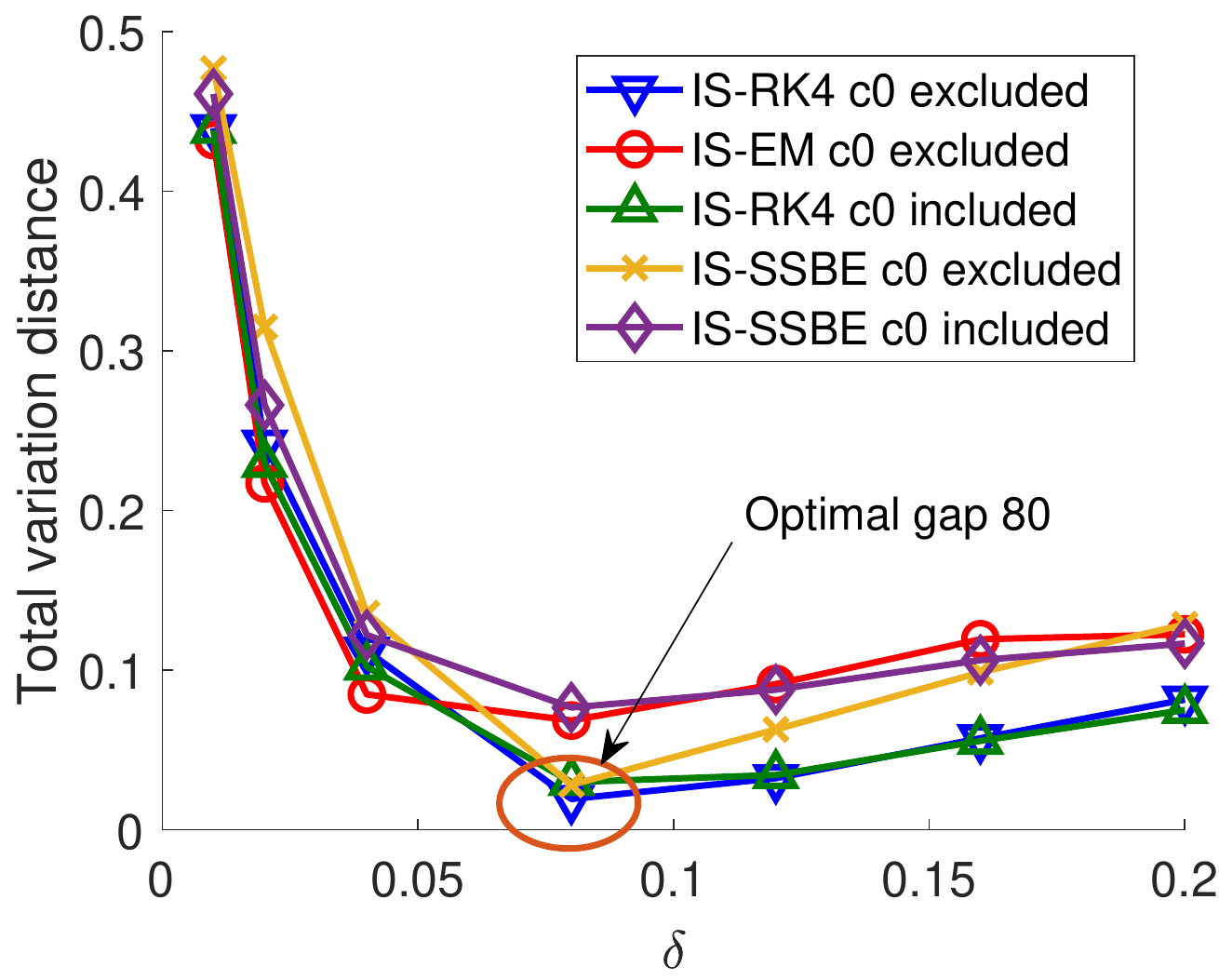}}
     \subfigure[PDF]{\includegraphics[width=0.32\textwidth]{./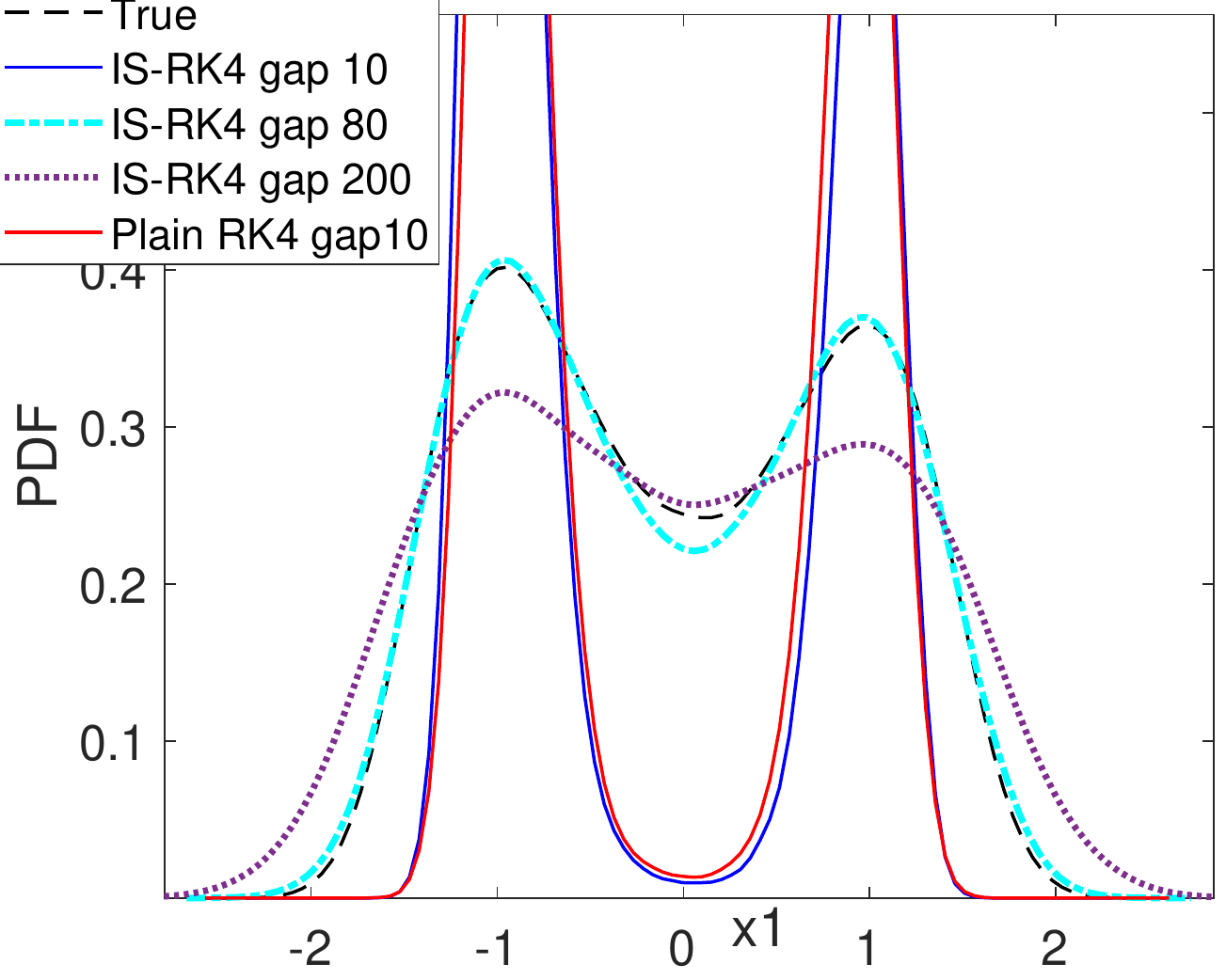}}
     \subfigure[ACF]{\includegraphics[width=0.32\textwidth]{./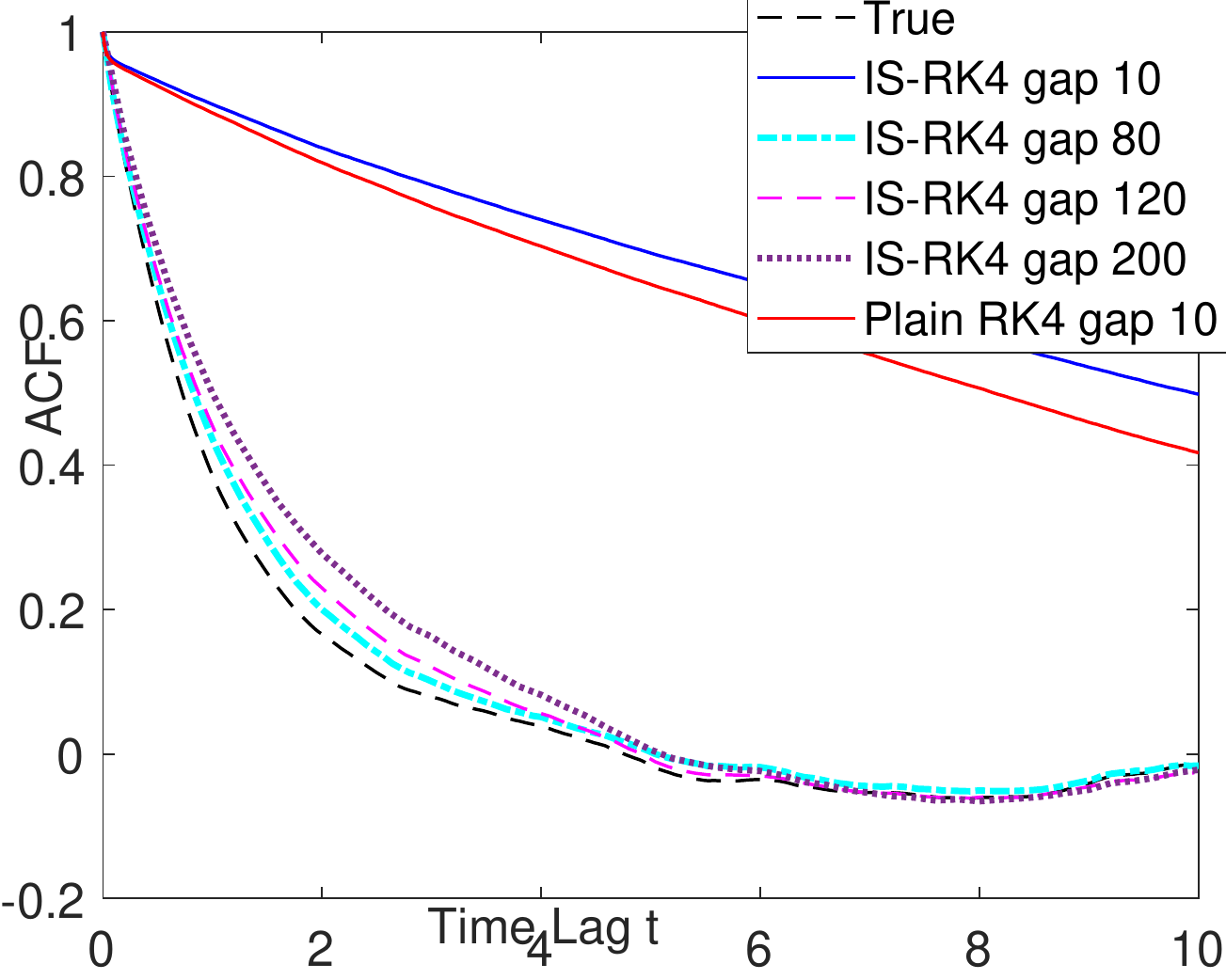}}
 \vspace{-3mm}     \caption{Large-time statistics for 1D double-well potential.  (a) TVD between the empirical invariant densities (PDF) of the inferred schemes and the reference PDF from data. (b) and (c):  PDFs and ACFs comparison between the IS-RK4 with $c_0$ excluded and the reference data.
    \label{fig:TVDPDF_LangLocal1D}}
\end{figure}

Figure~\ref{fig:TVDPDF_LangLocal1D} (b-c) show the PDFs and auto-correlation functions (ACFs) of IS-RK4 with $c_0$ excluded at three representative time gaps $\Gap\in\{10,80,200\}$, in comparison with those of the reference data and the plain RK4 with $\Gap=10$. 
When $\Gap$ is small, that is $\Gap=10$, the IS-RK4 is close to the plain RK4, and both produce PDFs and ACFs with large errors. The PDF and ACF generated by IS-RK4 with $\Gap=80$ is the best among all used gaps, fitting the true PDF and ACF almost perfectly. Furthermore, when time gap is as large as $\Gap=200$, the IS-RK4 can still produce qualitative results with the feature of PDF (that is the double-well feature), whereas the plain RK4 scheme blows up when $\Gap=20$.

\begin{figure}[htp!]
 \centering \vspace{-1mm}
         \subfigure[Convergence of $c_1$ in sample size]{
    \includegraphics[width=0.39\textwidth]{./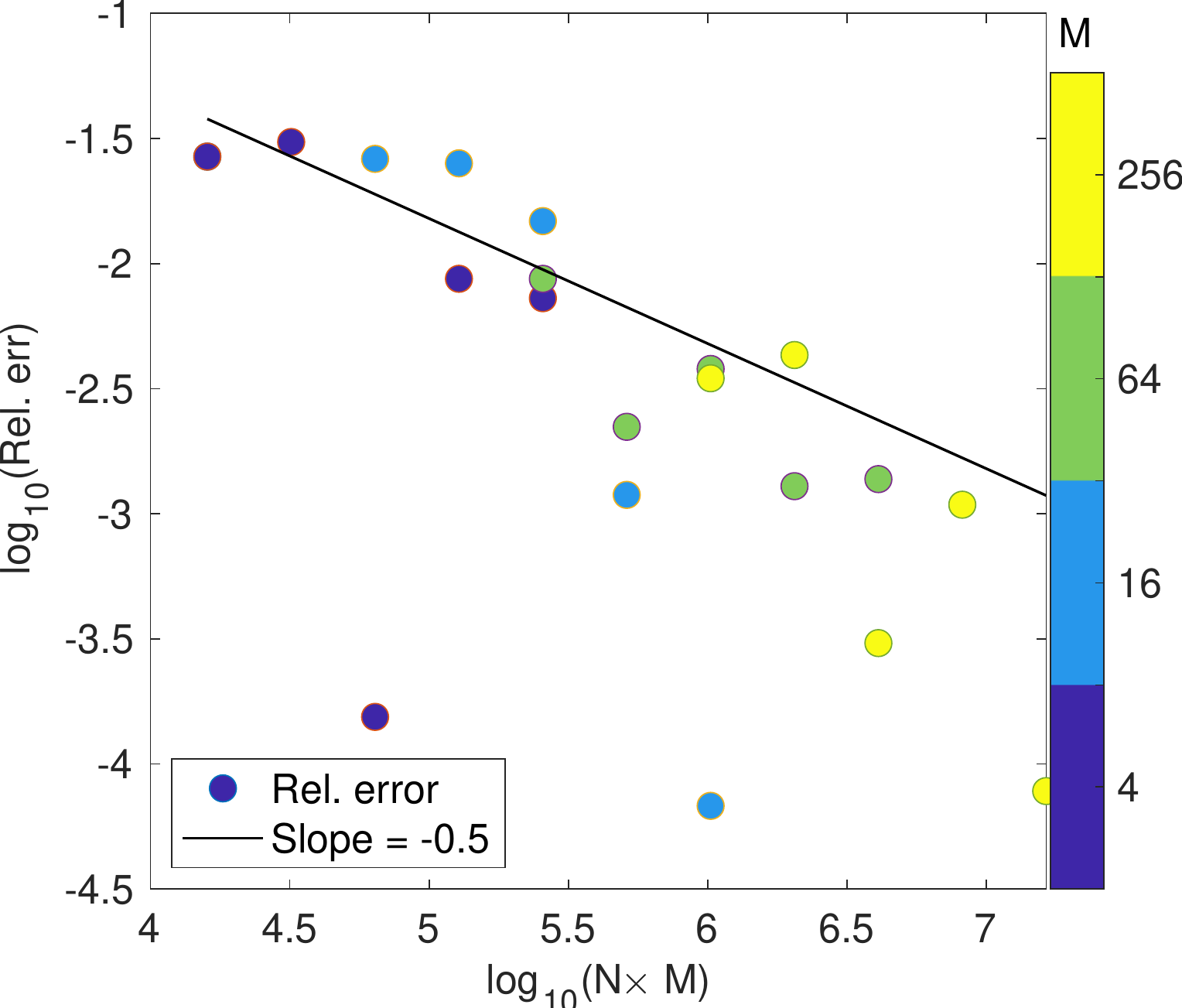}
    }
    \subfigure[Coefficients and residuals]{
    \includegraphics[width=0.58 \textwidth]{./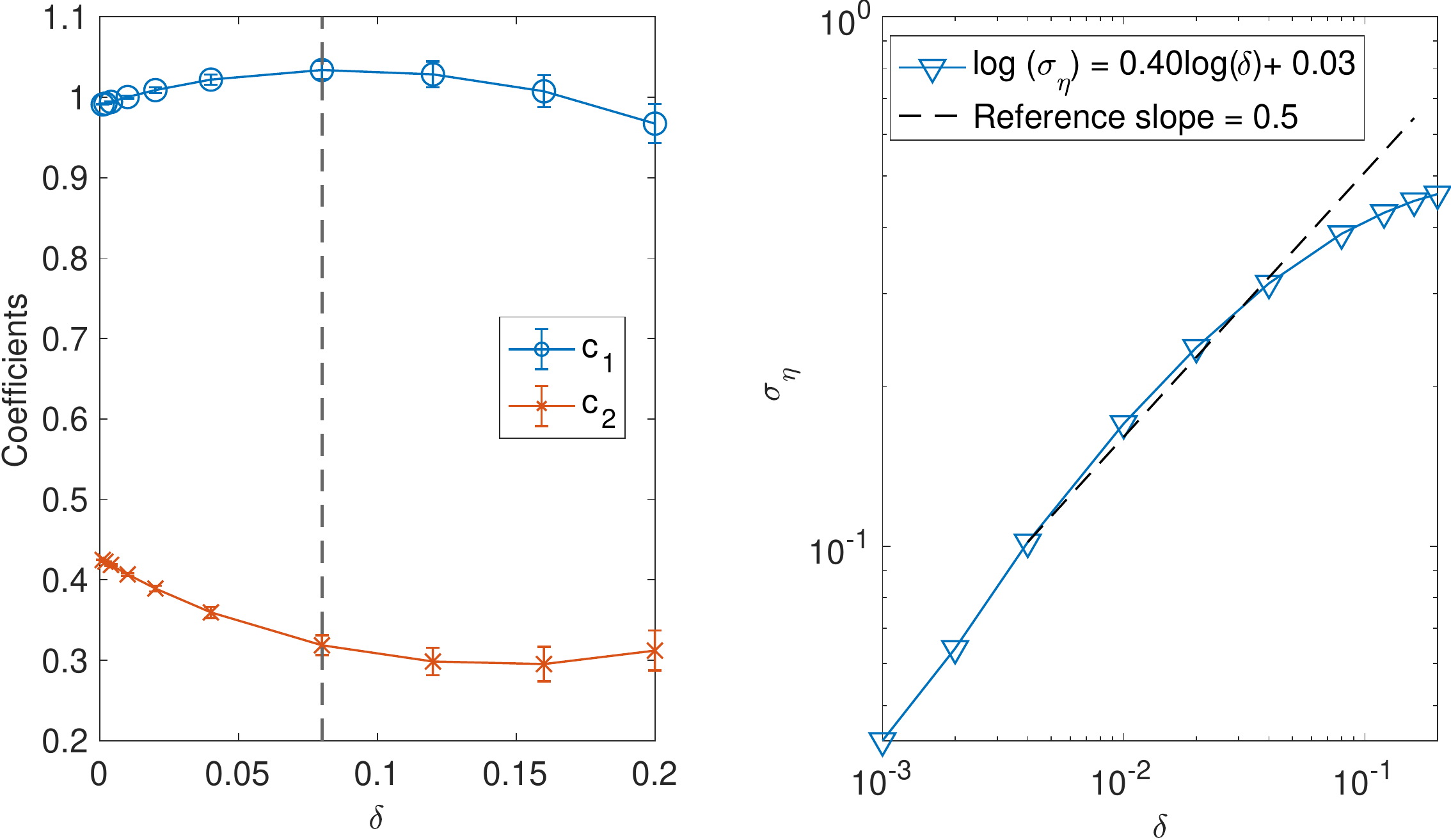}}
   
 \vspace{-3mm} \caption{1D double-well potential: Convergence of estimators in IS-RK4 with $c_0$ excluded. (a) The relative error of the estimator $\widehat{c_1^{\deltat, N,M}}$ with $\deltat = 80\times \Delta t$ converges at an order about $(MN)^{-1/2}$, matching Theorem \ref{thm_convEst}. (b) Left column: The coefficients depend on the time-step $\deltat = \Gap\times \Delta t$, with $c_1$ being almost 1 and $c_2$ being close to linear in $\deltat$ until $\deltat>0.08$. The error bars, which are  too narrow to be seen, are the standard deviations of the single-trajectory estimators from the $M$-trajectory estimator. Right column: The residual decays at an order $O(\deltat^{1/2})$, 
    matching Theorem \ref{thm:order_res}. }
    \label{fig:conv_LangLocal1D}
\end{figure}
We also test the convergence of the estimators in sample size and their dependence on the time-step, as well as the order of residual, aiming to confirm the theory in Section \ref{sec:conv}. Figure \ref{fig:conv_LangLocal1D}(a) shows that the relative error of $\widehat{c_1^{\deltat, N,M}}$ converges at a rate about $(MN)^{-1/2}$  as the sample size $N$ or $M$ increases. Here we take the estimator from the largest sample size as the projection coefficient, and compute the relative error to it. Note that the estimator of $c_1$ is close to 1. Thus, the estimator $\widehat{c_1^{\deltat, N,M}}$ converges at a rate  about $(MN)^{-1/2}$, matching Theorem \ref{thm_convEst}. The convergence of the estimator of $c_2$ has similar convergence. 

Figure \ref{fig:conv_LangLocal1D}(b) shows the dependence of the estimators on the time-step $\deltat = \Gap\times \Delta t$. The coefficient $c_1$ is almost 1, while $c_2$ is close to linear in $\deltat$. Furthermore, it also shows that the estimators from each single trajectory are close to the M-trajectory estimator, with small standard deviations represented by error bars that are too narrow to be seen. The residual decays at an order about $0.49$ with respect to $\deltat$, closely matching the rate in Theorem~\ref{thm:order_res}.



\subsection{A 2D gradient system}
    
We now consider a 2D dissipative gradient system  \cite{MSH02}
\begin{equation}\label{example:GradCouple}
d\Xb_t=-\nabla V(\Xb_t)dt+\sqrt{2/\beta} d\mathbf{B}_t,
\end{equation}
with $V(\Xb)=V(x_1, x_2)=\exp\left(\frac{\mu_1}{2}x_1^2+\frac{\mu_2}{2} x_2^2\right)$.
The corresponding invariant measure is $\frac{1}{Z}exp^{-\beta V(x_1,x_2)}$
where $Z$ being the normalizing constant $Z:=\int_{\mathbb{R}^2}exp^{-\beta V(x_1,x_2)}dx_1dx_2$. We set $\mu_1=0.1$, $\mu_2=1$ and $\beta=2$. Because $\mu_2=10 \mu_1$, so $x_1$ is a slowly evolving variable compared to $x_2$ and the resulting dynamics displays a multi-scale feature. Consequently, we estimate parameters of the inferred schemes entry-wisely and we focus on the marginal invariant density of $x_1$. 

We generate data by the SSBE scheme with $\Delta t=2e-3$ and time interval $[0, 2000] $ with total time steps $tN=1e6$. The rest setting and procedure are the same as the 1D double-well potential case.

 Figure~\ref{fig:TVDPDF_GradCouple}(a) shows that IS-RK4 and IS-SSBE schemes have comparable TVD, and they reach the minimal TVD when $\Gap = 120$, where IS-EM blows put.  They produce similar PDFs and ACFs, so we only present those of IS-SSBE with $c_0$ excluded.  Figure~\ref{fig:TVDPDF_GradCouple}(b-c) show the PDFs and ACFs at representative time gaps $\Gap\in \{10, 80, 120, 200\}$. The findings are similar to those for the 1D double-well potential: (i) the performance of IS-SSBE first improves and then deteriorates as $\Gap$ increases; (ii) IS-SSBE can tolerate significantly larger time-step than the plain SSBE, {where the plain SSBE blows up due to the Newton-Raphson method used as the implicit solver,} {which can only tolerate a small time-step limited by the inversion (similar to \eqref{phi1_BEs}) in the Newton-Raphson method in the implicit solver. 
 }


\begin{figure}[htp!]
 \centering \vspace{-1mm}
    \subfigure[TVD]{\includegraphics[width=0.30\textwidth,height= 4cm]{./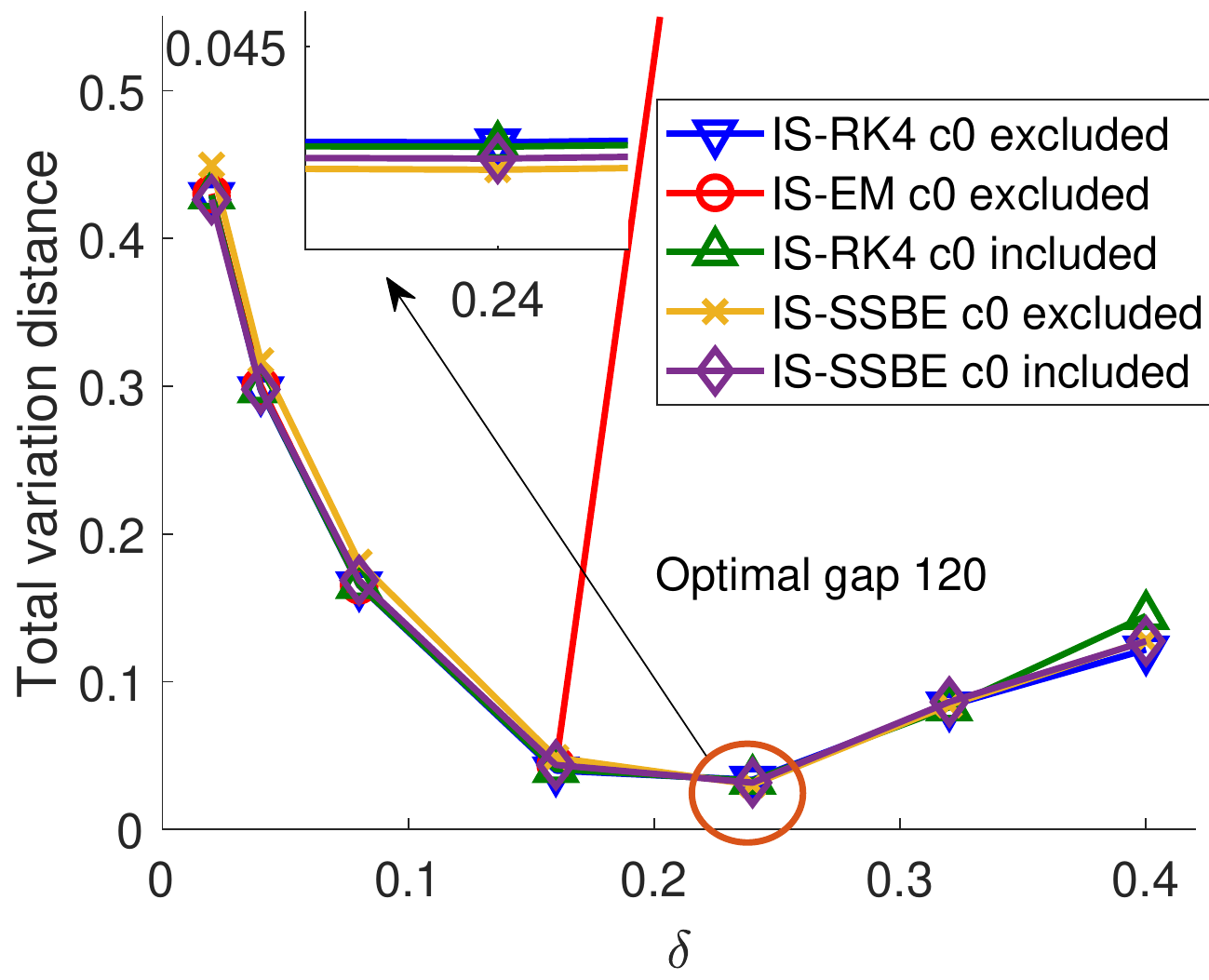}}
     \subfigure[PDF]{\includegraphics[width=0.32\textwidth,height= 4cm]{./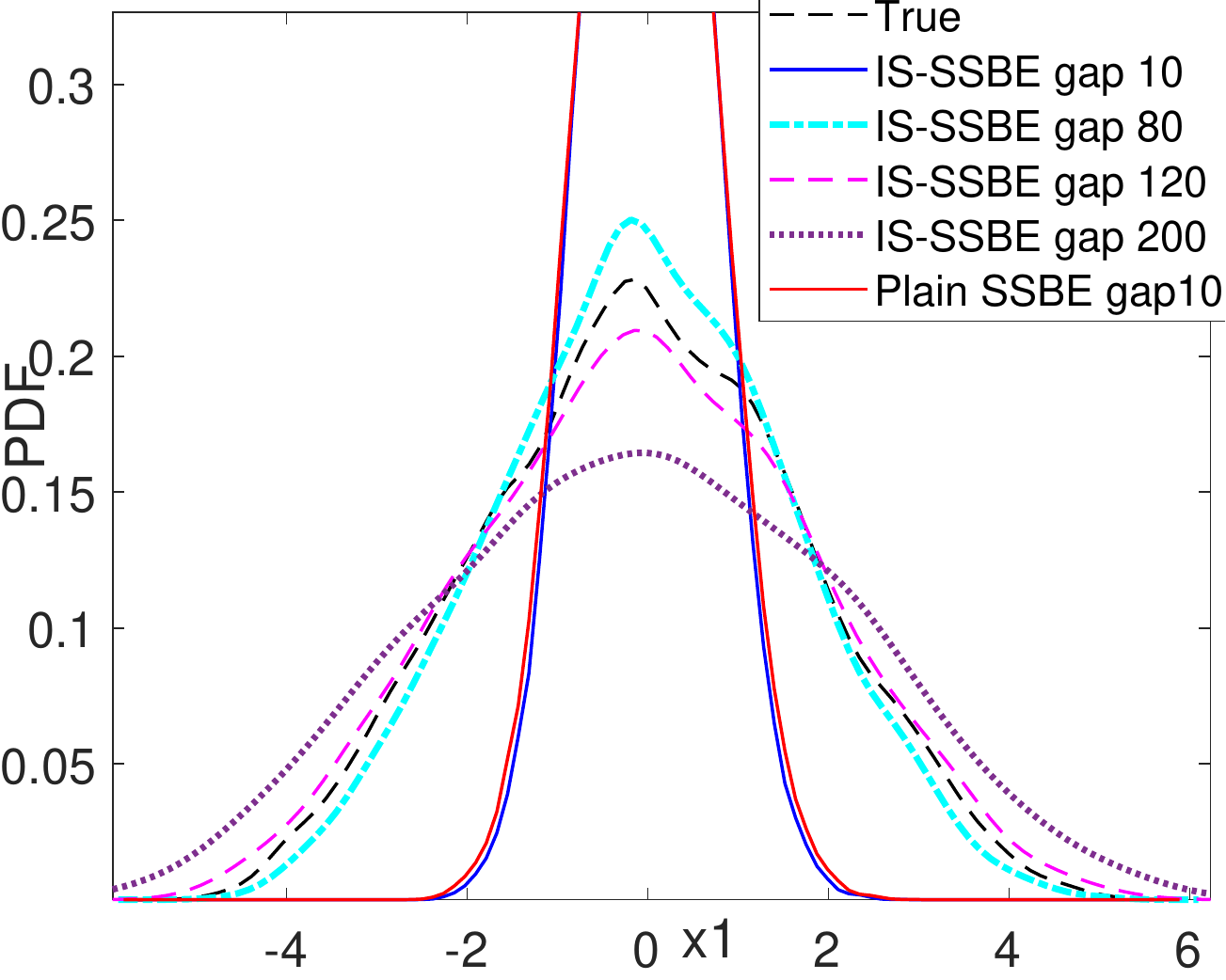}}
     \subfigure[ACF]{\includegraphics[width=0.32\textwidth,height= 4cm]{./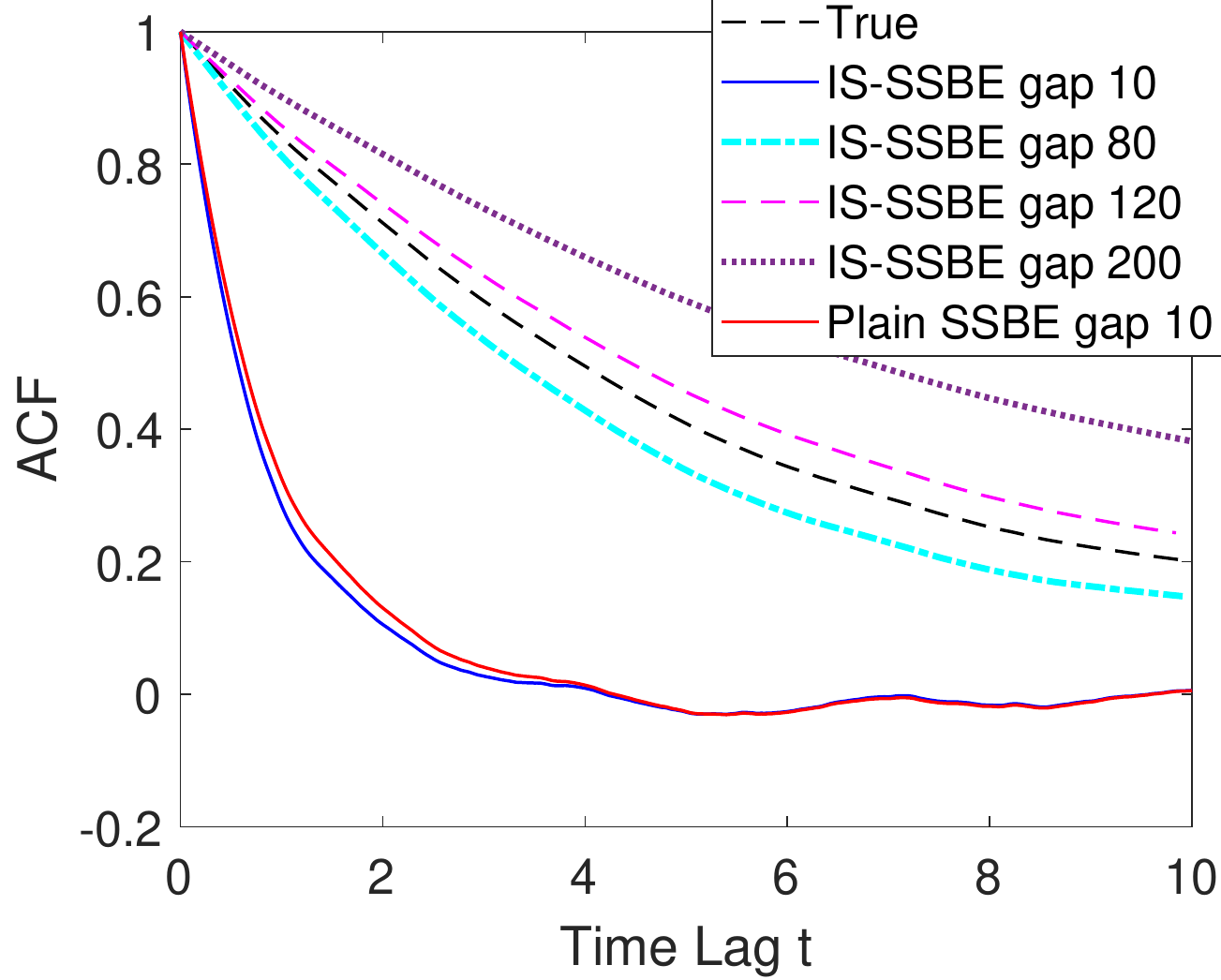}}
 \vspace{-3mm}     \caption{Large-time statistics for the 2D gradient system. (a) TVD between the $x_1$ marginal invariant densities (PDF) of the inferred schemes and the reference PDF from data. (b) and (c):  PDFs and ACFs comparison between IS-SSBE with $c_0$ excluded and the reference data.  }
    \label{fig:TVDPDF_GradCouple}
\end{figure}


The convergence of the estimators in sample size is also roughly of order $(MN)^{-1/2}$, as shown in Figure \ref{fig:conv_GradCouple}(a). Figure \ref{fig:conv_GradCouple}(b) shows that the estimators of $c_1$ and $c_2$ depend almost linearly on $\deltat$. Also, $c_1$'s single-trajectory estimators have negligible standard deviations from the $M$-trajectory estimator, while $c_2$'s estimators have a persistent noticeable standard deviation. This suggests that IS-SSBE has large uncertainties in the stochastic force term (recall that $c_1$ and $c_2$ being the coefficients of the scaled drift and the stochastic force, see \eqref{eq:IS_Euler}). In the right column, the residual of IS-SSBE remains little changed when $\deltat$ decreases, far from a decay rate $0.5$. This does not violate Theorem \ref{thm:order_res}, which is for parametrizations of explicit schemes. Instead, this highlights that the IS-SSBE is not a parametrization of the SSBE implicit scheme, and it has a flow map $\widetilde F_{SSBE}^{\deltat}$ with distance to the true flow map $\E \big[|\frac{\mX_{t_{n+1}} -\mX_{t_{n}}}{\deltat} -  \widetilde F_{SSBE}^{\deltat} (c, \mX_{t_i},\Delta \mB_{t_n})|^2 \big]$ depending little on $\deltat$. Such a feature may be helpful for further efforts on improving the parametric form.


\begin{figure}[htp!]
 \centering \vspace{-1mm}
    \subfigure[Convergence of $c_1$ in sample size]{
    \includegraphics[width=0.38\textwidth]{./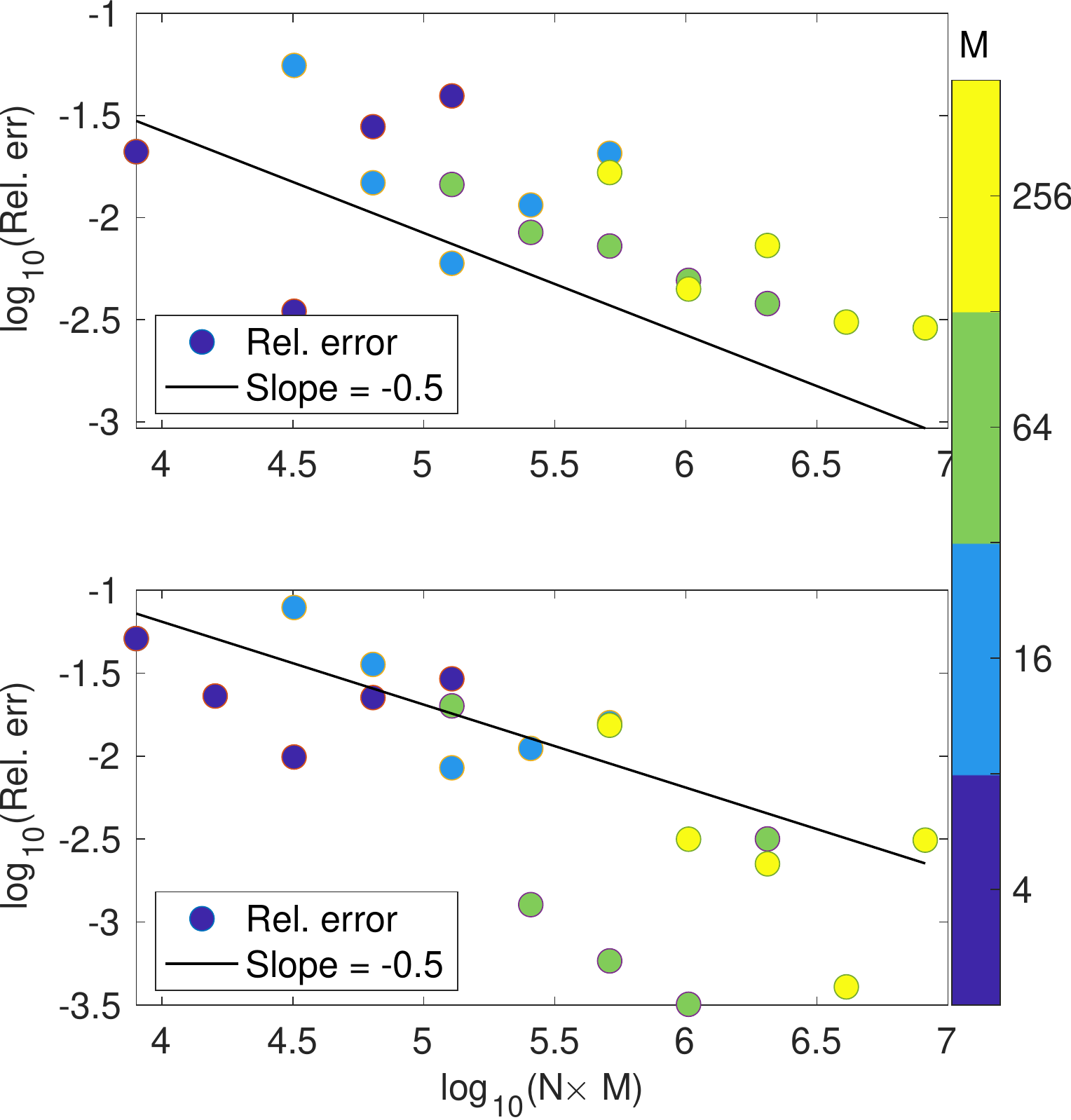}}
    \subfigure[Coefficients and residuals]{
    \includegraphics[width=0.58 \textwidth]{./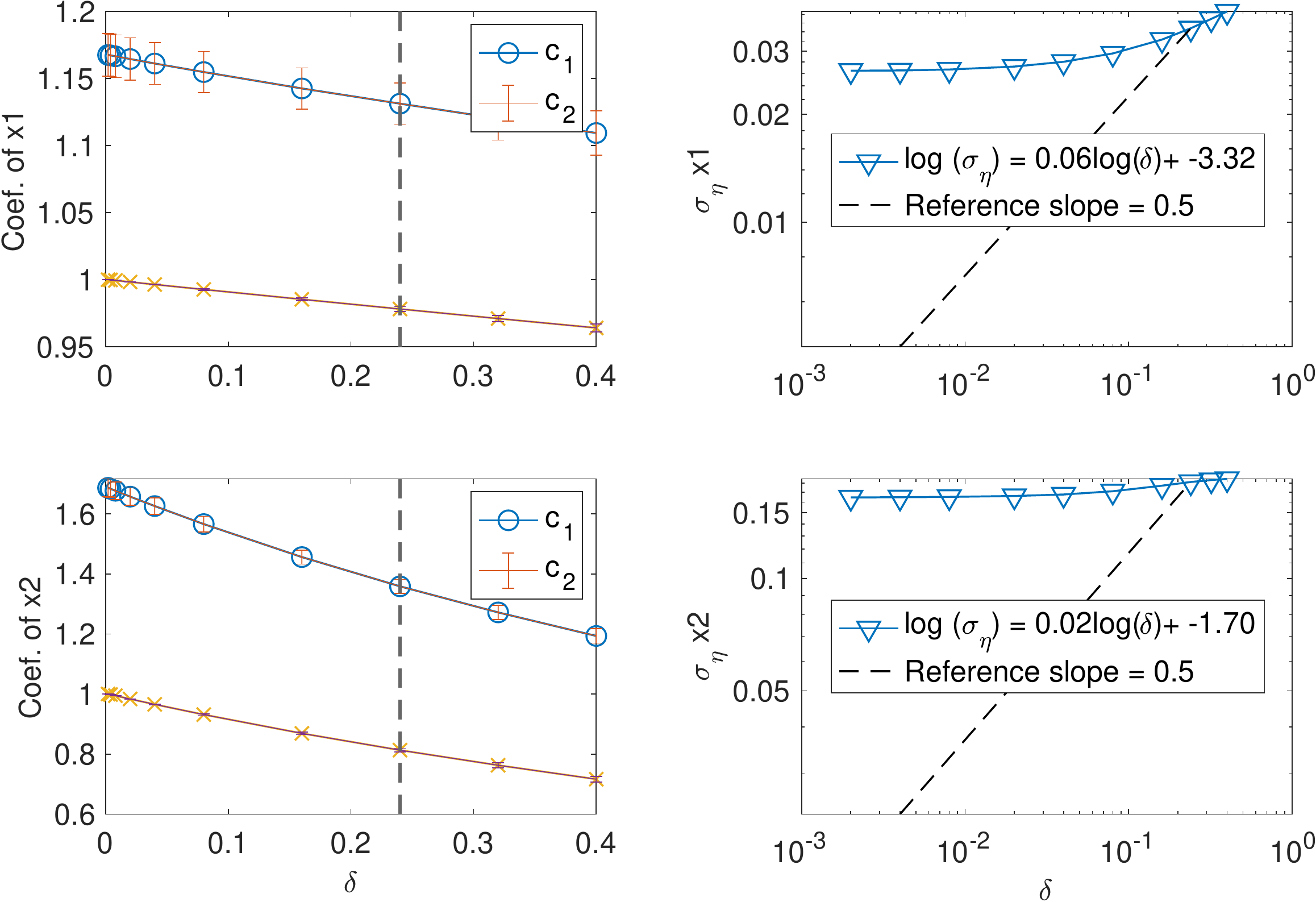}}
    
 \vspace{-3mm}     \caption{2D gradient system: Convergence of estimators in IS-SSBE with $c_0$ excluded. (a) The relative error of the estimator   $\widehat{c_1^{\deltat, N,M}}$ with $\deltat = 120 \Delta t$ converges at an order about $(MN)^{-1/2}$, matching Theorem \ref{thm_convEst}. (b) Left column: The estimators of $c_1, c_2$ are almost linear in $\deltat$. Right column: The residual changes little as $\deltat$ decreases, due to that IS-SSBE is not a parametrization of an explicit scheme (thus, Theorem~\ref{thm:order_res} does not apply).     
    \label{fig:conv_GradCouple} }
\end{figure}

Figure~\ref{fig:conv_GradCouple_RK4} shows the convergence of the estimator for IS-RK4. Similar to the 1D case, we observe a convergence rate $(MN)^{-1/2}$ in Figure~\ref{fig:conv_GradCouple_RK4}(a). Also,  in Figure~\ref{fig:conv_GradCouple_RK4}(b), we observe almost $\deltat$ independent estimators and the expected decay rate $O(\deltat^{1/2})$ proved in Theorem \ref{thm:order_res}. 


\begin{figure}[htp!]
 \centering \vspace{-1mm}
     \subfigure[Convergence of $c_1$ in sample size]{
    \includegraphics[width=0.38\textwidth]{./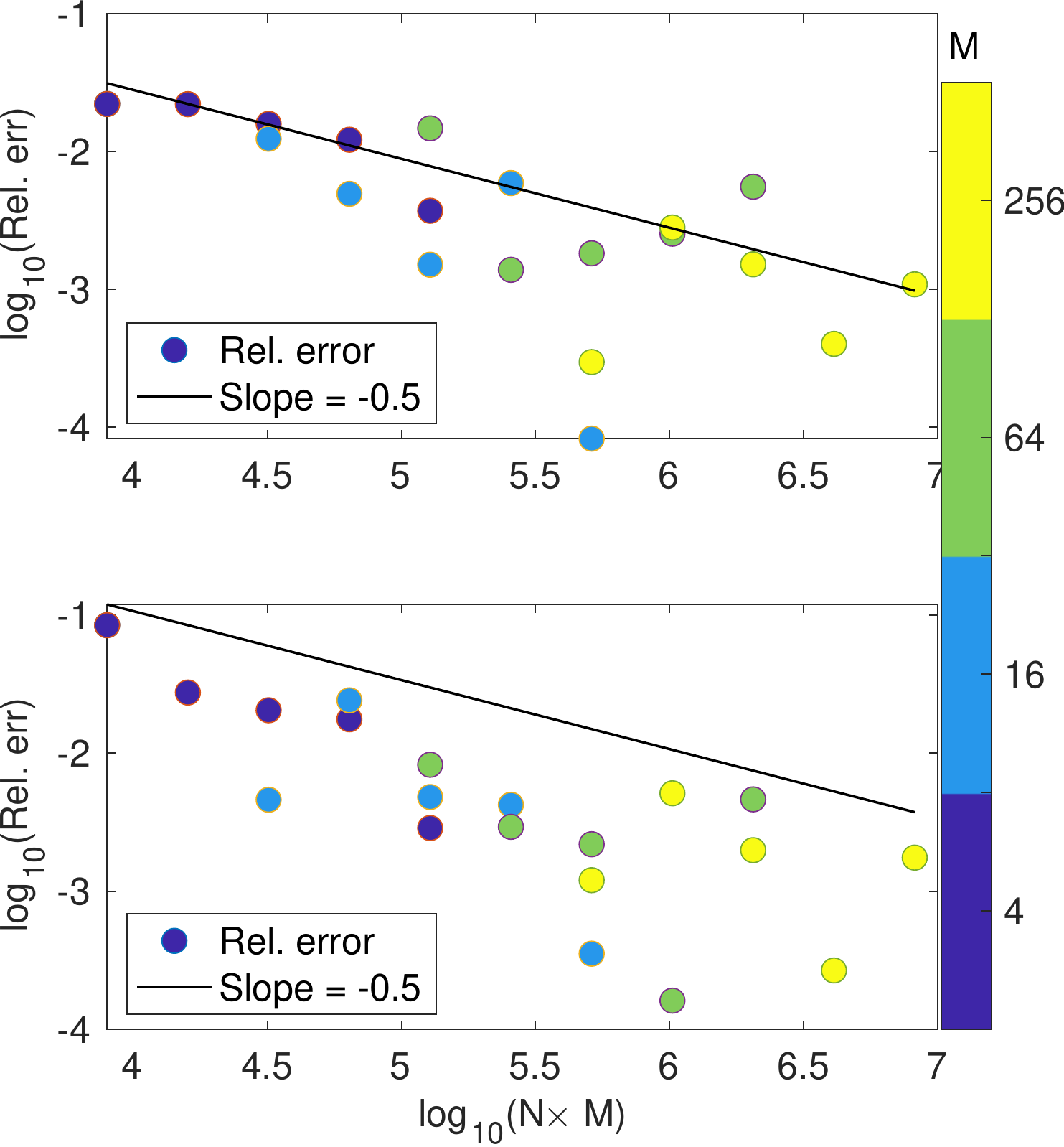}}
      \subfigure[Coefficients and residuals]{
    \includegraphics[width=0.58 \textwidth]{./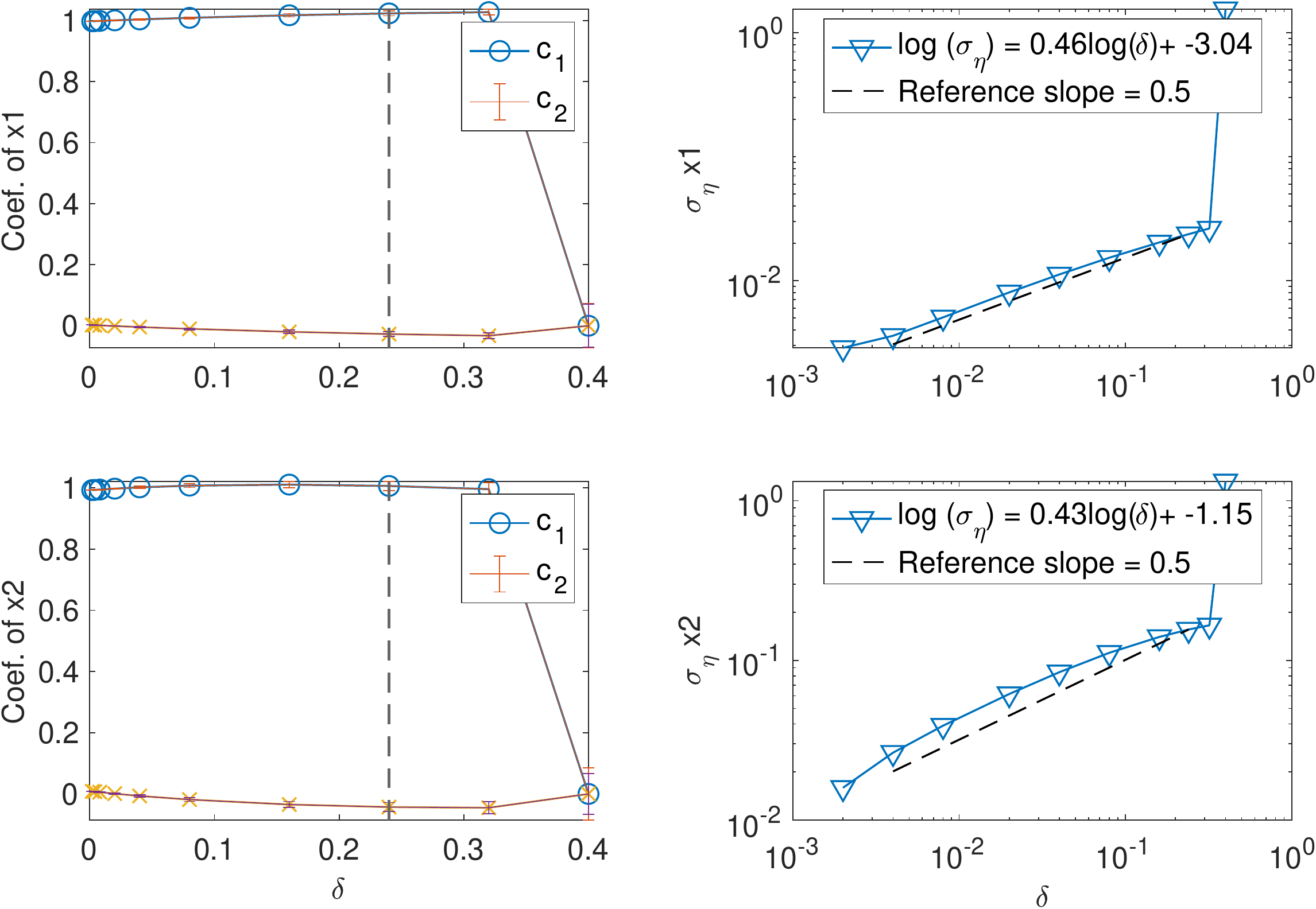}}
 \vspace{-3mm}     \caption{2D gradient system: Convergence of estimators in IS-RK4 with $c_0$ excluded. (a) The relative error of the estimator   $\widehat{c_1^{\deltat, N,M}}$ with $\deltat = 120 \Delta t$ converges at an order about $(MN)^{-1/2}$, matching Theorem \ref{thm_convEst}. (b) Left column: The estimators of $c_1, c_2$ are constant for all $\deltat$. Right column: The residual decays at an order $O(\deltat^{1/2})$, 
    matching Theorem \ref{thm:order_res}. 
    \label{fig:conv_GradCouple_RK4} }
\end{figure}

 \subsection{Stochastic Lorenz system with degenerate noise}
Consider next the 3D stochastic Lorenz system with degenerate noise \cite{MSH02}
\begin{equation}\label{example:Lorenz}
\begin{aligned}
dx_1 & =\sigma(x_2-x_1)dt+\sqrt{2/\beta}dB_1,\\ 
dx_2 & =\big( x_1(\gamma-x_3)-x_2\big)dt+\sqrt{2/\beta}dB_2,\\ 
dx_3& =(x_1x_2-bx_3)dt.
\end{aligned}
\end{equation}
We set $\sigma=10$, $\gamma=28$, $b=8/3$ and $\beta=1$. 
This stochastic chaotic system is exponentially ergodic with a regular invariant measure because it is dissipative and hypoelliptic. 


As before, we generate data by SSBE with $\Delta t=5e-4$ and a reference long trajectory with $tN=6e6$ time steps (or equivalently, on the time interval $[0,3000]$). We consider time gaps $\Gap\in \{20, 40, 80, 160, 240, 320,400\}$, so the maximal time-step is still $0.2$.  

\begin{figure}[htp!]
 \centering \vspace{-1mm}
    \subfigure[TVD]{\includegraphics[width=0.30\textwidth,height= 4cm]{./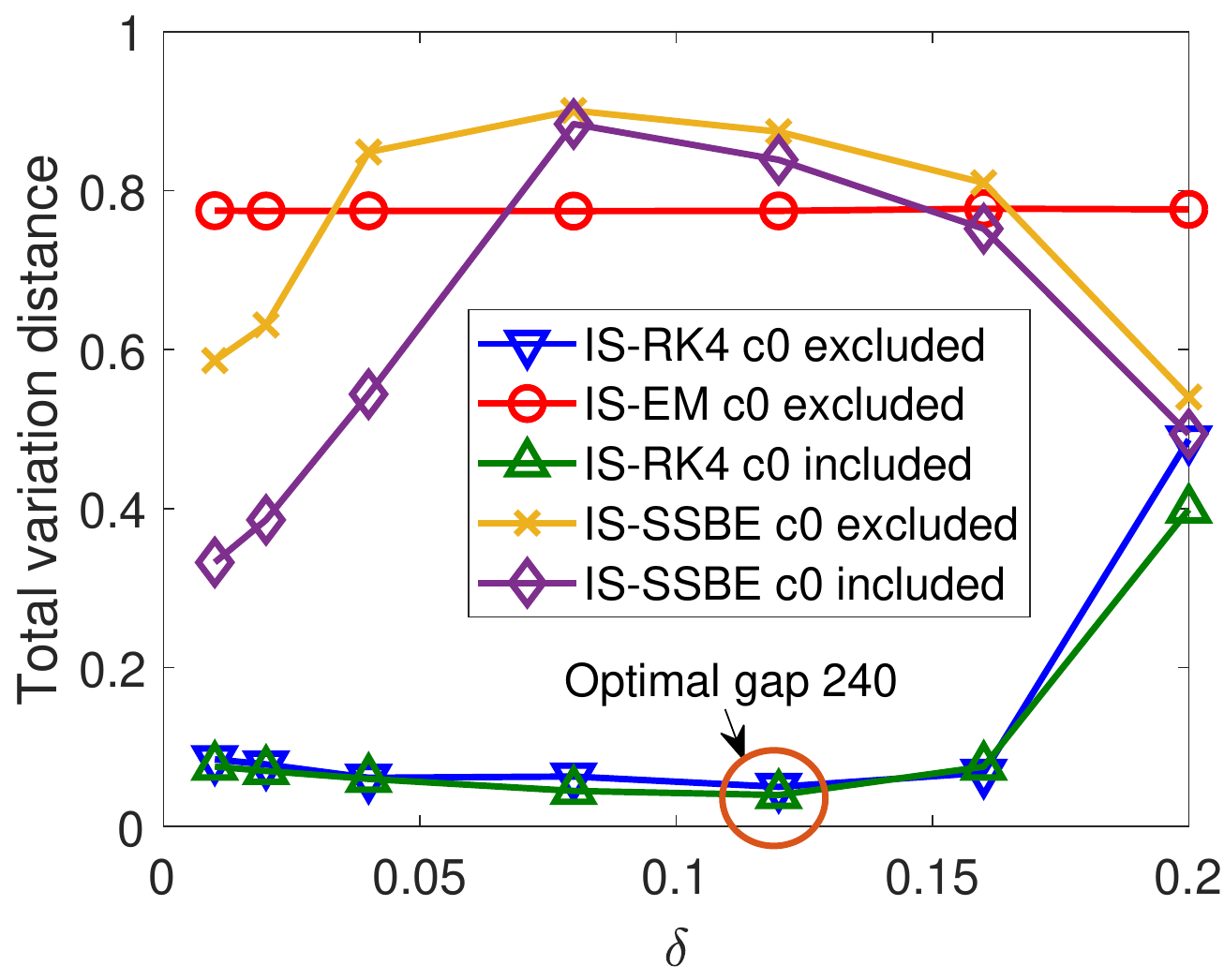}}
     \subfigure[PDF]{\includegraphics[width=0.32\textwidth,height= 4cm]{./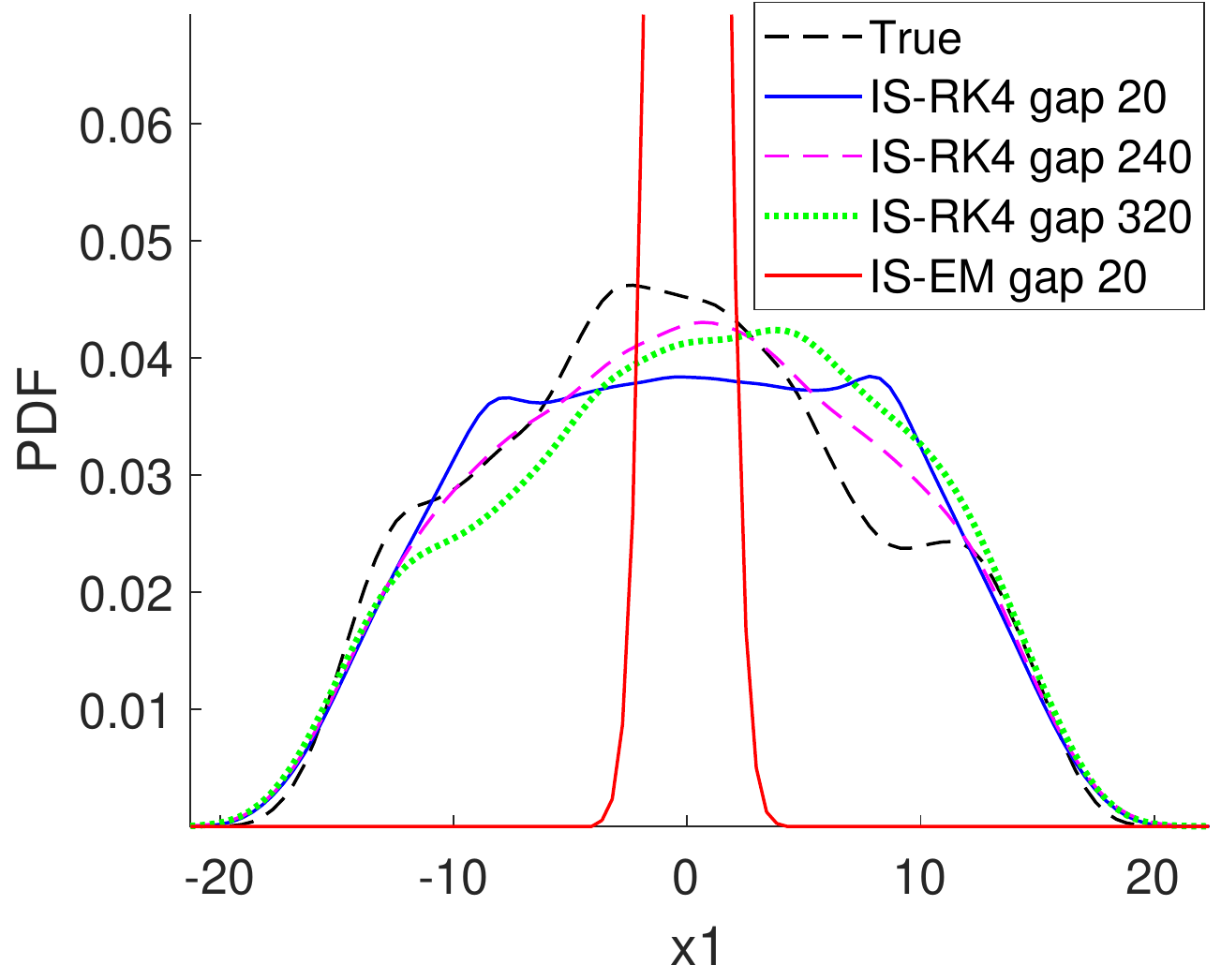}}
     \subfigure[ACF]{\includegraphics[width=0.32\textwidth,height= 4cm]{./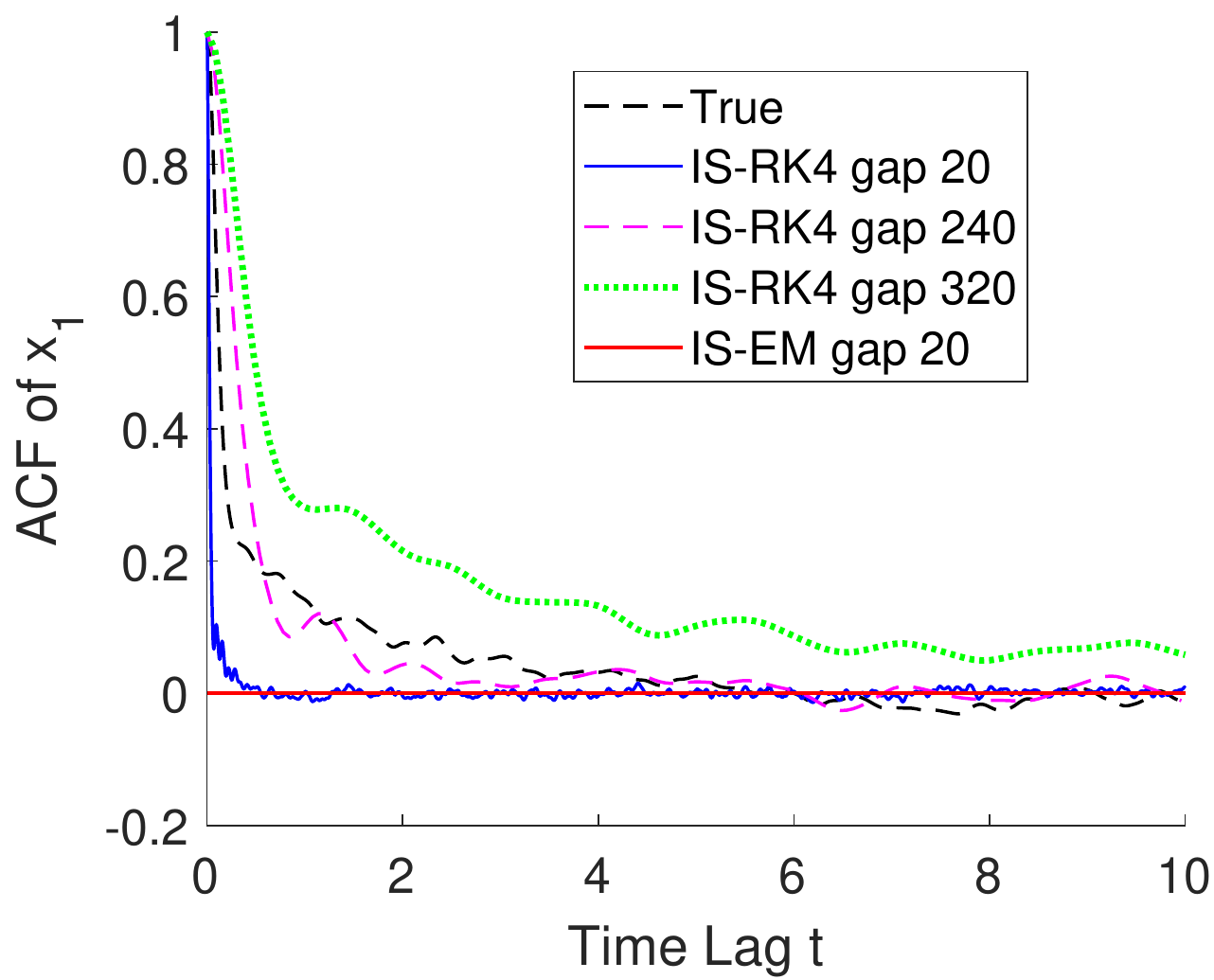}}
 \vspace{-3mm}     \caption{Large-time statistics of $x_1$ for the stochastic Lorenz system. (a) TVD between the $x_1$ marginal invariant densities (PDF) of the inferred schemes and the reference PDF from data. (b) and (c): PDFs and ACFs  comparison between IS-RK4 with $c_0$ included and the reference data.}
    \label{fig:TVDPDF_Lorenz}
\end{figure}

Figure~\ref{fig:TVDPDF_Lorenz}(a) shows the TVD of the inferred schemes. This time, the IS-RK4 scheme performs significantly better than IS-SSBE schemes, 
with relatively small TVD for most time gaps. This is due to the high-order approximation of RK4 to the drift, particularly when the drift dominates the dynamics (note that the state variable $x_1$ is at a scale of magnitude larger than the degenerate noise). The IS-RK4 with $c_0$ included performs the best and we select it for further demonstration of results. 

Figure~\ref{fig:TVDPDF_Lorenz}(b-c) show the PDFs and ACFs at representative time gaps $\Gap\in \{20, 240, 320\}$. Since the plain RK4 blows up at $\Gap=10$, so we display the results from IS-EM instead. The findings are similar to those for the 1D double-well potential: (i) the performance of IS-RK4 first improves and then deteriorates as $\Gap$ increases; (ii) IS-RK4 can tolerate significantly larger time-step than the plain RK4. 


\begin{figure}[htp!]
 \centering \vspace{-1mm}
    \subfigure[PDF]{\includegraphics[width=0.32\textwidth,height= 4.3cm]{./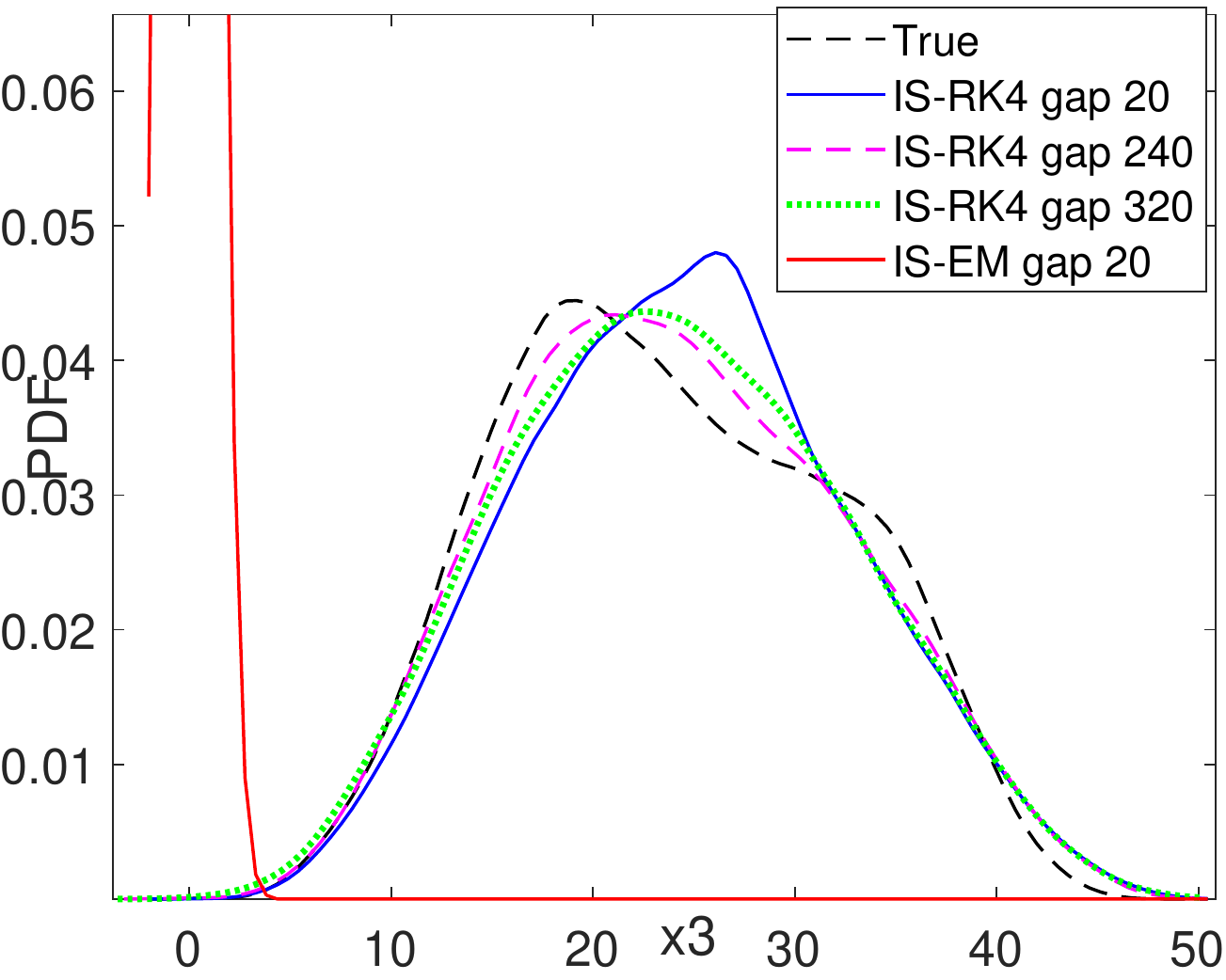}}
     \subfigure[ACF]{\includegraphics[width=0.32\textwidth,height= 4.3cm]{./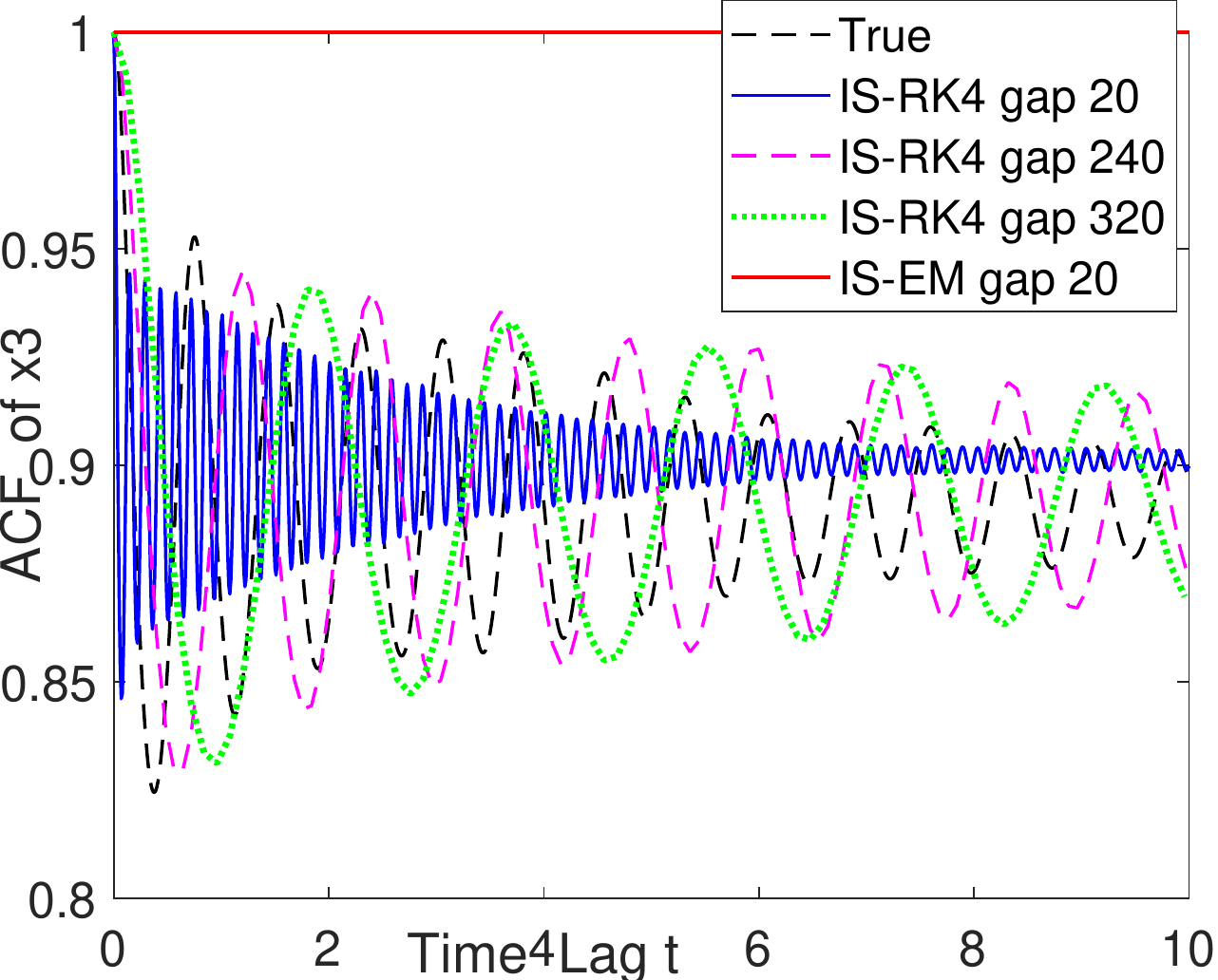}}
 \vspace{-3mm}     \caption{ACF and PDF of $x_3$ in the stochastic Lorenz system. Similar to the other examples, IS-RK4 (with $c_0$ included) reproduces the PDF and the ACF the best when the time-step is medium large, while plain RK4 and IS-EM blow up even when $\Gap=20$.}
    \label{fig:TVDPDF_Lorenz_x3}
\end{figure}

Moreover, we also plot the PDF and ACF of $x_3$ in Figure~\ref{fig:TVDPDF_Lorenz_x3}. The dynamics of $x_3$ is the most challenging because there is no diffusive stochastic force acting on it and its ACF is highly oscillatory. As usual, the IS-RK4 can reproduce the PDF and ACF well, whereas the plain RK4 and IS-EM blow up even when the time-step is small. In particular, the IS-RK4 produce the periodic and decay feature of $x_3$'s ACF when the time-step is medium large, that is $\Gap=240$. We expect the best performance to be achieved at a gap between 120 to 240, and we postpone the study on the optimal time gap and other improvements in future work. 

The IS-RK4 has convergence results  mostly as expected. Figure~\ref{fig:conv_res_Lorenz_RK4}(a) shows that the estimators of $c_1$ for each entry of $(x_1,x_2,x_3)$ converge at an almost perfect rate $(NM)^{-1/2}$. Figure~\ref{fig:conv_res_Lorenz_RK4}(b) shows that the estimator of $c_0,c_1,c_2$ remain little varied until $\deltat=0.12$ (i.e., $\Gap>240$) for each entry. It also shows that the residuals of all three entries decay at a rate slightly higher than $O(\deltat^{1/2})$. 
\begin{figure}[htp!]
 \centering \vspace{-1mm}
    \subfigure[Convergence of $c_1$ in sample size]{
 \includegraphics[width=0.37\textwidth]{./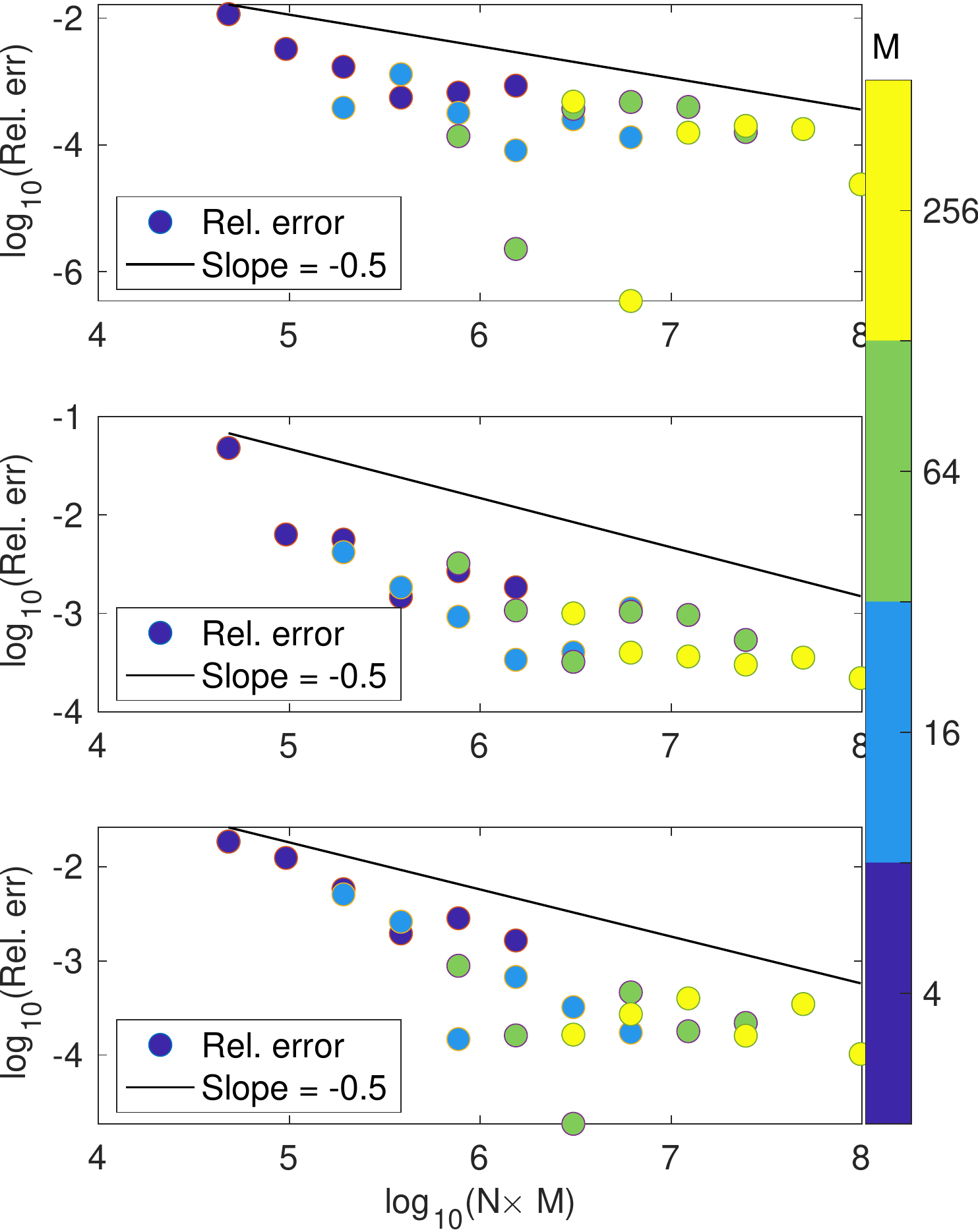}}
    \subfigure[Coefficients and residuals]{
    \includegraphics[width=0.59 \textwidth]{./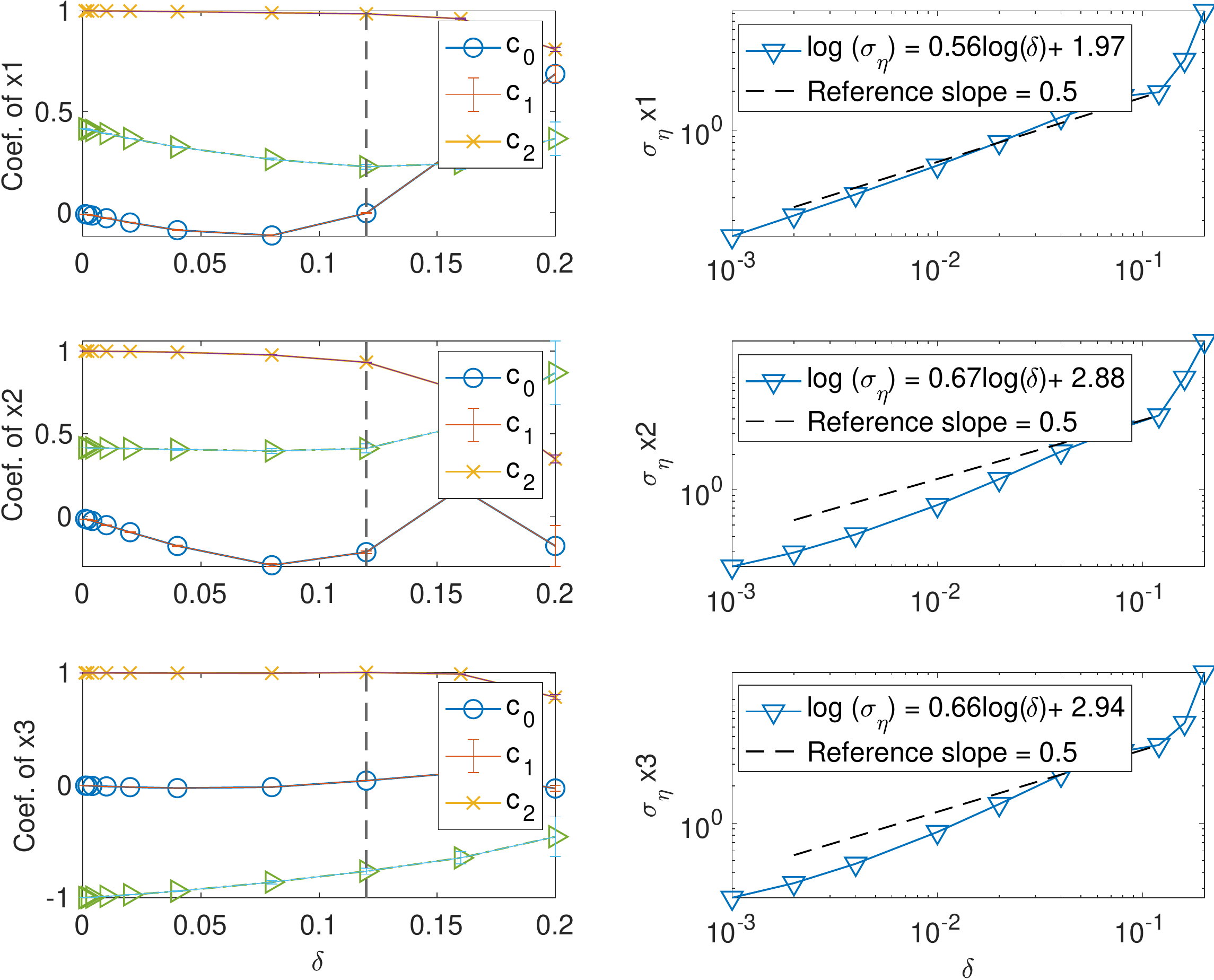}}
   
 \vspace{-3mm}     \caption{The 3D stochastic Lorenz system: Convergence of estimators in IS-RK4 with $c_0$ included. (a) The relative error of the estimator   $\widehat{c_1^{\deltat, N,M}}$ with $\deltat = 240 \Delta t = 0.12$ converges at order about $(MN)^{-1/2}$, matching Theorem \ref{thm_convEst}. (b) Left column: The estimators of $c_0,c_1, c_2$ are varies little until $\deltat>0.12$. The vertical dash line is the optimal time gap.  Right column: The residuals decay at orders slightly higher than $O(\deltat^{1/2})$. 
    \label{fig:conv_res_Lorenz_RK4} }
\end{figure}

\section{Conclusions and outlook}
We have introduced a framework to infer schemes adaptive to large time-stepping (ISALT) from data for locally Lipschitz ergodic SDEs. We formulate it as a statistical learning problem, in which we learn an approximation to the infinite-dimensional discrete-time flow map. By deriving informed basis functions from classical numerical schemes, we obtain a low-dimensional parameter estimation problem, avoiding the curse of dimensionality in statistical learning.

Under mild conditions, we show that the estimator converges as the data size increases, and the inferred scheme has same a 1-step strong order as the explicit scheme it parametrizes. Thus, our algorithm comes with performance guarantee. Numerical tests on three non-globally Lipschitz examples confirm the theory. The inferred scheme can tolerate large time-steps and efficiently and accurately simulate the invariant measure.

Many fronts are left open for further investigation. (1) The optimal time-step. We have observed that the inferred schemes perform the best (producing the most accurate invariant measure) when the time-step is medium-large. This observation suggests a trade-off between the 1-step approximation error of the flow map and the accumulated numerical error in the invariant measure. Similar optimality in the medium range was observed in space-time model reduction \cite{Lu20Burgers} and in parameter estimation for multiscale diffusion \cite{PS07}. It is crucial to have a universal \emph{a priori} estimate on the optimal time-step, which can guide all data-driven model reduction approaches.  (2) Multi-step noise. We focused on approximate flow maps that use only the increments of the Brownian motion.  This limits the performance of the inferred scheme because we omit the details of the stochastic force. Thus, a multi-step noise provides the necessary information for further improvements, particularly when the noise is non-stationary \cite{Li_Duan21}. (3) Non-ergodic systems and/or space-time reduction. We expect to extend the framework of ISALT to simulate non-ergodic systems or achieve space-time reduction for high-dimensional nonlinear systems by extracting informed basis functions from the classical numerical scheme. 

\bigskip
\noindent \textbf{Acknowledgments }
 FL is grateful for supports from NSF-DMS 1913243 and NSF-DMS 1821211. XL is grateful for supports from NSF DMS CAREER-1847770. FY is grateful for supports from AMS-Simons travel grants.   FL would like to thank Kevin Lin for helpful discussions. 

\bibliographystyle{alpha}
\let\oldbibliography\thebibliography
\renewcommand{\thebibliography}[1]{%
  \oldbibliography{#1}%
  \setlength{\itemsep}{0pt}%
}
\bibliography{ref_FeiLU,ref_LearnDiscretization}

\end{document}